\documentclass[11pt]{article}

\usepackage[margin=1in]{geometry}
\usepackage{amsmath,amssymb,amsthm,mathtools}
\usepackage{microtype}
\usepackage{aliascnt}
\usepackage[hidelinks]{hyperref}
\usepackage{cleveref}

\allowdisplaybreaks

\newtheorem{theorem}{Theorem}[section]
\newaliascnt{lemma}{theorem}
\newtheorem{lemma}[lemma]{Lemma}
\aliascntresetthe{lemma}
\newaliascnt{proposition}{theorem}
\newtheorem{proposition}[proposition]{Proposition}
\aliascntresetthe{proposition}
\newaliascnt{corollary}{theorem}
\newtheorem{corollary}[corollary]{Corollary}
\aliascntresetthe{corollary}

\newtheorem*{restatedtheorem}{Theorem}
\theoremstyle{definition}
\newaliascnt{definition}{theorem}

\aliascntresetthe{definition}
\theoremstyle{remark}
\newaliascnt{remark}{theorem}

\aliascntresetthe{remark}

\newcommand{\R}{\mathbb{R}}
\newcommand{\C}{\mathbb{C}}
\newcommand{\cE}{\mathcal{E}}
\newcommand{\norm}[1]{\left\lVert #1\right\rVert}
\newcommand{\abs}[1]{\left\lvert #1\right\rvert}
\newcommand{\seminorm}[1]{\left[\!\left[ #1\right]\!\right]}

\providecommand{\acknowledgement}[1]{\section*{Acknowledgements}#1}

\title{Convergence rates for pivoted QR and LU }
\author{Marc Aur\`ele Gilles\thanks{Department of Mathematics, Princeton University, Princeton, NJ 08544, United States. \href{mailto:gilles@princeton.edu}{gilles@princeton.edu}.}}
\date{\today}

\begin{document}

\maketitle

\begin{abstract}
Pivoted QR and pivoted LU decompositions are greedy algorithms used to compute
low-rank approximations of matrices from selected columns, or selected rows and columns. Despite their practical robustness, general worst-case bounds comparing their errors with those of the best
corresponding low-rank approximations
contain exponentially growing factors and do not explain their behavior
under modest singular value decay. We prove that under approximate greedy pivoting, their error is
controlled by the determinant of a submatrix, which is bounded by the
geometric mean of the leading singular values. Using this bound, we establish convergence rates under
algebraic and geometric singular value decay.

We also extend the LU analysis to functions of two variables. By bounding
the determinants of arbitrary sampled submatrices, we obtain algebraic
convergence rates under differentiability assumptions and
geometric convergence under analyticity. 
\end{abstract}

\section{Introduction}

Pivoted QR and LU decompositions are among the most widely used algorithms for computing low-rank approximations
of matrices, across a wide range of applications. The LU construction appears in different literatures under
names including incomplete LU, Gaussian elimination with complete pivoting,
adaptive cross approximation, and Geddes--Newton series methods
\cite{Bebendorf2000,TownsendTrefethen2013,
CarvajalChapmanGeddes2005}. Pivoted QR appears as column-pivoted QR
\cite{BusingerGolub1965} and as the greedy
algorithm in reduced-basis methods \cite{BinevEtAl2011}.

Both are greedy algorithms that build a low-rank approximation one step at
a time. Given a matrix $A\in\R^{M\times N}$, they may be written in terms
of their residuals as
\begin{equation*}
    \widehat E^{(0)}=A,
    \qquad
    \widehat E^{(k+1)}
    =
    (I-q_{k+1}q_{k+1}^\top)\widehat E^{(k)},
    \qquad
    q_{k+1}
    =
    \frac{\widehat E^{(k)}_{:,j_{k+1}}}
         {\norm{\widehat E^{(k)}_{:,j_{k+1}}}_2},
\end{equation*}
for QR, and
\begin{equation*}
    E^{(0)}=A,
    \qquad
    E^{(k+1)}
    =
    E^{(k)}
    -
    \frac{
      E^{(k)}_{:,j_{k+1}}E^{(k)}_{i_{k+1},:}
    }{
      E^{(k)}_{i_{k+1},j_{k+1}}
    },
\end{equation*}
for LU. In either case, the difference between $A$ and the residual after
$k$ steps has rank at most $k$. Greedy QR chooses the column index
$j_{k+1}$ so that
$\norm{\widehat E^{(k)}_{:,j_{k+1}}}_2$ is as large as possible, while
greedy LU chooses the location $(i_{k+1},j_{k+1})$ so that
$\abs{E^{(k)}_{i_{k+1},j_{k+1}}}$ is as large as possible.

The strong practical performance of these algorithms has resulted in their
widespread use, but it is not captured by general worst-case convergence
bounds. For example, writing $E^{(k)}$ for the residual after $k$ steps of
pivoted LU, one has
\cite{CortinovisKressnerMassei2020}\footnote{The displayed form follows
from \cite[Remark~7]{CortinovisKressnerMassei2020}.}
\begin{equation}
    \min_{0\le j\le k}\norm{E^{(j)}}_{\max}
    \le
    4^k\sigma_{k+1}(A),
    \label{eq:intro-classical-lu-bound}
\end{equation}
where
$
    \norm{A}_{\max}:=\max_{i,j}\abs{A_{ij}},
$
and $\sigma_k(A)$ denotes the $k$th singular value of $A$. A corresponding
bound for exact column-pivoted QR is
\begin{equation}
    \norm{\widehat E^{(k)}}_{2,\infty}
    \le
    2^k\sigma_{k+1}(A),
    \label{eq:intro-classical-qr-bound}
\end{equation}
where
$
    \norm{A}_{2,\infty}:=\max_j\norm{A_{:,j}}_2;
$
see \cite[Theorem~7.2]{GuEisenstat1996}.\footnote{This result is usually
stated as
$
    \norm{\widehat E^{(k)}}_2
    \le
    2^k\sqrt{N-k}\,\sigma_{k+1}(A).
$
Here $\norm{\cdot}_2$ denotes the spectral norm. The displayed
$2,\infty$-norm bound follows from the same theorem and is more directly
comparable with the bounds proved below.}
On the other hand, the Eckart--Young--Mirsky theorem states
(see, e.g., \cite{trefethen_book}) that
\[
    \sigma_{k+1}(A)
    =
    \min_{\operatorname{rank}(B)\le k}\norm{A-B}_2.
\]
Thus, the general estimates for pivoted QR and LU differ from the optimal
rank-$k$ error by exponentially growing factors. These factors are not
merely artifacts of the analysis: the factor $4^k$ in
\eqref{eq:intro-classical-lu-bound} is asymptotically sharp; see
\cite[Remark~3.3]{HarbrechtPetersSchneider2012} and
\cite[Section~6.2]{Higham1987}, and column-pivoted QR likewise admits
examples with exponentially poor error; see
\cite[Example~1]{GuEisenstat1996}.

Taken at face value, these bounds guarantee convergence only when the
singular values decay geometrically fast enough to overcome the
exponential factors. For example, if
$
    \sigma_k(A)=\mathcal O(\rho^k),
$
then \eqref{eq:intro-classical-lu-bound} guarantees convergence only when
$\rho<1/4$, while \eqref{eq:intro-classical-qr-bound} requires
$\rho<1/2$. This does not explain their behavior in practice, where
pivoted QR and LU are often observed to converge at rates close to the
singular-value decay even when that decay is much more modest.

In this paper, we derive convergence rates for pivoted QR and LU under
broad classes of singular-value decay. In particular, if
\[
    \sigma_k(A)=\mathcal O(k^{-p})
\]
for any $p>0$, then we show that
\begin{equation*}
    \min_{0\le j\le k}\norm{E^{(j)}}_{\max}
    =
    \mathcal O(k^{-p}),
    \qquad
    \norm{\widehat E^{(k)}}_{2,\infty}
    =
    \mathcal O(k^{-p}).
\end{equation*}
Thus, the best residual among the first $k$ LU iterates and the residual
of the $k$th QR iterate decay with the same algebraic exponent as the
singular values. For brevity, we refer to these two quantities as the
residuals of LU and QR, respectively. Similarly, if
$\sigma_k(A)=\mathcal O(\rho^k)$ for some $0<\rho<1$, then both residuals
converge at the geometric rate $\mathcal O(\rho^{k/2})$. These results
provide a theoretical explanation for the strong performance of the
algorithms under singular-value decay for which the usual worst-case
bounds are not informative.


A particular advantage of the LU-based approximation is that, ignoring
the pivot search, each step can be computed using only one row and one
column of the residual. Thus, the approximation can be constructed by
evaluating only a small fraction of the matrix entries. This is especially
important in the functional version of the problem, where the matrix $A$
is replaced by a function of two variables $f(x,y)$ and one seeks a
separable approximation
\begin{equation*}
    f(x,y)
    \approx
    \sum_{j=1}^k u_j(x)v_j(y).
\end{equation*}
Each step then requires only two univariate slices of the residual, which
can often be approximated accurately using a small number of function
values. This observation is the basis for algorithms including Chebfun2
\cite{TownsendTrefethen2013}, Chebfun3
\cite{HashemiTrefethen2017}, and methods based on adaptive cross
approximation \cite{Bebendorf2000,Bebendorf2011}. We extend the LU
analysis to this setting and obtain algebraic convergence rates under
differentiability assumptions and geometric convergence under
analyticity.

In both the matrix and functional settings, the dominant cost is often
the pivot search. Finding an entry of largest magnitude requires examining
the entire residual, which costs $\mathcal O(MN)$ for an
$M\times N$ matrix, while in the functional setting it requires solving
a global optimization problem. In practice, exact pivoting is therefore
often replaced by a heuristic that returns only an approximate maximum.
Our analysis allows approximate greedy pivoting and shows that it attains
the same convergence rates, up to a multiplicative constant measuring the
quality of the pivots.

The rest of the paper is organized as follows.
We first establish the matrix results in \cref{sec:matrix-case}, and then
extend the analysis to the functional setting in
\cref{sec:function-case}. We conclude in \cref{sec:conclusion}.

\section{Matrix case}
\label{sec:matrix-case}

\subsection{Background and notation}
\label{sec:background}
Let $A=[a_{ij}]\in\R^{M\times N}$. We write $A_{:,j}$ and $A_{i,:}$
for the $j$th column and $i$th row of $A$, respectively. For ordered
index tuples $I=(i_1,\ldots,i_r)$ and $J=(j_1,\ldots,j_s)$, we write
\[
    A_{I,J}
    :=
    [A_{i_\ell,j_t}]_{\ell=1,\ldots,r;\,t=1,\ldots,s}.
\]
A colon denotes all indices, so that $A_{:,J}$ and $A_{I,:}$ denote the
corresponding column and row submatrices.
We write
\[
    \norm{A}_{\max}
    :=
    \max_{1\le i\le M,\,1\le j\le N}\abs{A_{i,j}},
    \qquad
    \norm{A}_F
    :=
    \left(
        \sum_{i=1}^M\sum_{j=1}^N \abs{A_{i,j}}^2
    \right)^{1/2},
    \qquad
    \norm{A}_{2,\infty}
    :=
    \max_{1\le j\le N}\norm{A_{:,j}}_2.
\]
We denote the singular values of $A$ by
\[
    \sigma_1(A)\ge\sigma_2(A)\ge\cdots\ge0.
\]

\subsection{Geometric-mean bound for LU}
\label{subsec:matrix-lu-geometric-mean}

A pivoted LU factorization is the result of the following Gaussian elimination iteration:
\begin{equation}
\begin{aligned}
    E^{(0)}&=A, 
    & 
    E^{(k+1)} = E^{(k)}
    -
    \frac{
        E^{(k)}_{:,j_{k+1}}E^{(k)}_{i_{k+1},:}
    }{E^{(k)}_{i_{k+1},j_{k+1}}}, \\
    \widehat A^{(0)}&= 0, & 
        \widehat A^{(k+1)} = \widehat A^{(k)}
    +
    \frac{
        E^{(k)}_{:,j_{k+1}}E^{(k)}_{i_{k+1},:}
    }{E^{(k)}_{i_{k+1},j_{k+1}}} ,\\
\end{aligned}
\label{eq:matrix-update}
\end{equation}
where $\widehat A^{(k)}$ is the approximation at step $k$, and $E^{(k)}$ is the residual. 
Here $(i_{k+1},j_{k+1})$ is the pivot location and $p_{k+1}:=E^{(k)}_{i_{k+1},j_{k+1}}$ is the
pivot.
Exact greedy pivoting, also known as complete pivoting
\cite{trefethen_book}, chooses the pivot such that
\begin{equation*}
    \abs{p_{k+1}}
    =
    \norm{E^{(k)}}_{\max}.
\end{equation*}
More generally, we say that a pivoted LU factorization uses approximate greedy
pivoting if, for some fixed $0<\gamma\le1$,
\begin{equation}
    \abs{p_{k+1}}
    \ge
    \gamma\norm{E^{(k)}}_{\max}.
    \label{eq:weak-matrix-pivot}
\end{equation}
Thus, $\gamma=1$ corresponds to exact greedy pivoting.

The crucial ingredient of our analysis is the following identity.

\begin{lemma}[Pivot-product identity]
\label{lem:algorithmic-reduction}
Consider any choice of $n$ nonzero pivots $p_1,\ldots,p_n$ and
associated pivot locations
$
    (i_1,j_1),\ldots,(i_n,j_n)
$
in \eqref{eq:matrix-update}. Define
\[
    I_n=(i_1,\ldots,i_n),
    \qquad
    J_n=(j_1,\ldots,j_n),
    \qquad
    A_n:=A_{I_n,J_n}.
\]
Then
\begin{equation*}
    \det A_n=\prod_{j=1}^n p_j.
\end{equation*}
\end{lemma}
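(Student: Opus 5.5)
We argue by induction on $n$, tracking how the elimination step \eqref{eq:matrix-update} acts on determinants of submatrices. The key fact is that each update is a rank-one correction built from a row and a column of the current residual. Restricted to any rows $I$ and columns $J$ that contain the current pivot row and column, this correction is a Gaussian elimination (Schur complement) step. Gaussian elimination preserves determinants up to the pivot factor.

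Concretely, we will prove the stronger statement that for every $k\le n$,
\[
    \det A_{I_n,J_n} = \Bigl(\prod_{j=1}^k p_j\Bigr)\,\det E^{(k)}_{I_n\setminus I_k,\;J_n\setminus J_k},
\]
where $I_n\setminus I_k=(i_{k+1},\ldots,i_n)$ and similarly for $J$. Taking $k=n$ gives the claim, with the empty determinant equal to $1$. For $k=0$ the identity is trivial.

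For the inductive step, set $B:=E^{(k)}_{(i_{k+1},\ldots,i_n),(j_{k+1},\ldots,j_n)}$. Its $(1,1)$ entry is $p_{k+1}\neq0$. Restricting the update to these indices gives
\[
    E^{(k+1)}_{(i_{k+1},\ldots,i_n),(j_{k+1},\ldots,j_n)} = B - \frac{B_{:,1}B_{1,:}}{p_{k+1}}.
\]
This matrix has zero first row and zero first column. Its trailing block is the Schur complement $B/p_{k+1}$, which equals $E^{(k+1)}_{(i_{k+2},\ldots,i_n),(j_{k+2},\ldots,j_n)}$. The Schur determinant formula then gives $\det B = p_{k+1}\det(B/p_{k+1})$, which completes the induction.

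Two points need checking. First, the update of the full residual, restricted to a submatrix, depends only on entries of $E^{(k)}$ inside that submatrix's rows and columns together with the pivot row and column. Since the pivot row and column belong to the index sets, the restriction commutes with the update. Second, the pivot indices are distinct, so $A_n$ is a genuine square submatrix. This holds because after step $k$ the row $i_k$ and column $j_k$ of $E^{(k)}$ vanish, so a nonzero later pivot cannot reuse them.

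The main obstacle is keeping the ordering of the tuples consistent. The pivots are taken in order $i_1,\ldots,i_n$, so the pivot always sits in the $(1,1)$ position of the current block and no permutation signs arise. With this ordering the bookkeeping reduces to the Schur complement identity above.
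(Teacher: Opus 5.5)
Your Schur-complement induction is correct. The paper gives no proof of its own and cites the identity as classical (Wilkinson, Bebendorf, Townsend). Your argument is the standard one behind those references: restricting the elimination to rows $I_n$ and columns $J_n$ is Gaussian elimination on $A_n$ with the pivots taken on the diagonal in order, so $\det A_n=\prod_{j=1}^n p_j$. Your distinctness check is not actually needed, because your induction never uses distinctness and so works verbatim for index tuples with repetitions; distinctness then follows from $\det A_n\ne0$.
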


This identity is classical; it appears in Wilkinson's
analysis of Gaussian elimination with complete pivoting
\cite[Equation~(4.2)]{Wilkinson1961} and has frequently been used since then, see, e.g.,
\cite[Lemma~2]{Bebendorf2000} and
\cite[Equation~(3.1)]{Townsend2016}.
We emphasize that it holds for any sequence of nonzero pivots in
\eqref{eq:matrix-update}; no greedy pivoting assumption is required.
Combined with approximate greedy pivoting, it gives the following bound.

\begin{theorem}[Geometric-mean bound for LU]
\label{thm:matrix-convergence}

Let $A\in\R^{M\times N}$, and suppose that the first $n$ pivots in
\eqref{eq:matrix-update} are nonzero and satisfy
\eqref{eq:weak-matrix-pivot}. Then
\begin{equation*}
    \min_{0\le k\le n-1}\norm{E^{(k)}}_{\max}
    \le
    \gamma^{-1}\abs{\det A_n}^{1/n}
    \le
    \gamma^{-1}
    \left(\prod_{j=1}^n\sigma_j(A)\right)^{1/n}.
\end{equation*}
\end{theorem}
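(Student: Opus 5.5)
The plan is to prove the two inequalities separately. The first combines the pivot-product identity with approximate greedy pivoting. The second is a singular-value inequality for submatrices.

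\emph{First inequality.} By \cref{lem:algorithmic-reduction}, $\abs{\det A_n}=\prod_{k=0}^{n-1}\abs{p_{k+1}}$. By \eqref{eq:weak-matrix-pivot}, each factor satisfies $\abs{p_{k+1}}\ge\gamma\norm{E^{(k)}}_{\max}$. Bounding each residual below by the minimum over $0\le k\le n-1$ gives
\[
    \abs{\det A_n}\ge \gamma^n\Bigl(\min_{0\le k\le n-1}\norm{E^{(k)}}_{\max}\Bigr)^n .
\]
Taking $n$th roots and dividing by $\gamma$ yields the first inequality. This step also uses that all $n$ pivots are nonzero, which is an assumption of the theorem, so $\det A_n\neq 0$ is not needed separately.

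\emph{Second inequality.} Here the goal is $\abs{\det A_n}\le\prod_{j=1}^n\sigma_j(A)$. Since $A_n$ is square,
\[
    \abs{\det A_n}=\prod_{j=1}^n\sigma_j(A_n).
\]
Next we use interlacing for submatrices: $\sigma_j(A_{I,J})\le\sigma_j(A)$ for every $j$. One way to see this is that $A_{I,J}=P^\top A Q$, where $P$ and $Q$ are selection matrices with orthonormal columns. Then the Courant--Fischer min-max characterization, or simply $\sigma_j(XY)\le\norm{X}_2\,\sigma_j(Y)$ applied on each side, gives $\sigma_j(P^\top A Q)\le\sigma_j(A)$. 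Multiplying these bounds over $j=1,\ldots,n$ gives the claim.

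A subtle point is that the pivot indices might repeat. This does not happen: once row $i_k$ has been used as a pivot row, it is zero in all subsequent residuals, and likewise column $j_k$. Any repeated index would therefore force a zero pivot, contradicting the hypothesis, so $I_n$ and $J_n$ have distinct entries and the selection matrices $P$ and $Q$ really have orthonormal columns.

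The only step that needs any care is the submatrix singular-value monotonicity. It is standard and follows directly from min-max, so no real obstacle is expected.
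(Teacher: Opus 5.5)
Your proposal is correct and follows essentially the same route as the paper: the pivot-product identity combined with the weak-pivot condition gives the first inequality, and $\abs{\det A_n}=\prod_j\sigma_j(A_n)\le\prod_j\sigma_j(A)$, by monotonicity of singular values under row and column restriction, gives the second. Your extra observation is also correct and is left implicit in the paper: pivot rows and columns are zeroed in all later residuals, so the indices in $I_n$ and $J_n$ are distinct.
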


\begin{proof}
For $j=1,\ldots,n$, approximate greedy pivoting gives
\[
    \norm{E^{(j-1)}}_{\max}
    \le
    \gamma^{-1}\abs{p_j}.
\]
Therefore,
\begin{align*}
    \min_{0\le k\le n-1}\norm{E^{(k)}}_{\max}
    &\le
    \gamma^{-1}\min_{1\le j\le n}\abs{p_j}\\
    &\le
    \gamma^{-1}
    \left(\prod_{j=1}^n\abs{p_j}\right)^{1/n}\\
    &=
    \gamma^{-1}\abs{\det A_n}^{1/n},
\end{align*}
where the last equality follows from
\cref{lem:algorithmic-reduction}.

Moreover,
\[
    \abs{\det A_n}
    =
    \prod_{j=1}^n\sigma_j(A_n)
    \le
    \prod_{j=1}^n\sigma_j(A),
\]
where the inequality follows from the monotonicity of singular
values under row and column restriction~
\cite[Corollary~7.3.6]{HornJohnson2013}.
Taking $n$th roots proves the second inequality.
\end{proof}

That is, the smallest max-norm residual among the first $n$ iterates is
bounded by the geometric mean of the first $n$ singular values. This will allow us to bound the minimum residual under decay of the singular values.

The minimum over $k$ which appears in the bound may seem to require an additional search, but it is
harmless algorithmically. Indeed, under exact greedy pivoting,
$
    \abs{p_j}=\norm{E^{(j-1)}}_{\max},
$
so the smallest pivot among the first $n$ pivots identifies the smallest
residual among the corresponding iterates. Since the pivots are already
computed as part of the algorithm, one only needs to record the index of
the smallest pivot.

Under approximate greedy pivoting, the smallest pivot does not necessarily
correspond to the smallest residual. Nevertheless, the iterate 
preceding the smallest pivot satisfies the same upper bound. That is, let
$
    \ell_n
    \in
    \operatorname*{arg\,min}_{1\le j\le n}\abs{p_j}$.
Then the weak-pivot condition and
\cref{lem:algorithmic-reduction} give
\[
\begin{aligned}
    \norm{E^{(\ell_n-1)}}_{\max} \le
    \gamma^{-1}\abs{p_{\ell_n}} \le
    \gamma^{-1}
    \left(\prod_{j=1}^n\abs{p_j}\right)^{1/n}
=
    \gamma^{-1}\abs{\det A_n}^{1/n}.
\end{aligned}
\]
Thus, although this iterate may differ from the one attaining the smallest
residual, it satisfies the same determinant and singular-value bounds as
the running minimum.

\subsection{Geometric-mean bound for QR}
\label{subsec:matrix-qr-geometric-mean}

Column-pivoted QR~\cite{BusingerGolub1965} generates a sequence of
residuals
\begin{equation}
\begin{aligned}
    \widehat E^{(0)}&=A,\\
    \rho_{k+1}
    &:=
    \norm{\widehat E^{(k)}_{:,j_{k+1}}}_2,\\
    q_{k+1}
    &:=
    \frac{\widehat E^{(k)}_{:,j_{k+1}}}{\rho_{k+1}},\\
    \widehat E^{(k+1)}
    &:=
    (I-q_{k+1}q_{k+1}^\top)\widehat E^{(k)}.
\end{aligned}
\label{eq:qr-update}
\end{equation}
Here $j_{k+1}$ is the selected column index and $\rho_{k+1}$ is the norm
of the selected residual column. Writing $Q_k=[q_1\ \cdots\ q_k]$, the
residual and approximation after $k$ steps are
\[
    \widehat E^{(k)}
    =
    (I-Q_kQ_k^\top)A,
    \qquad
    \widehat A^{(k)}
    =
    Q_kQ_k^\top A.
\]
Greedy pivoting chooses $j_{k+1}$ so that
$
    \rho_{k+1}
    =
    \norm{\widehat E^{(k)}}_{2,\infty}.
$
We say that the pivoted QR factorization uses approximate greedy pivoting
if, for some fixed $0<\gamma\le1$,
\begin{equation}
    \rho_{k+1}
    \ge
    \gamma\norm{\widehat E^{(k)}}_{2,\infty}.
    \label{eq:weak-qr-pivot}
\end{equation}

The QR analogue of the pivot-product identity is the following.
\begin{lemma}[QR pivot-product identity]
\label{lem:qr-pivot-product}
Suppose that the first $n$ selected residual columns in
\eqref{eq:qr-update} are nonzero. Set
\[
    J_n=(j_1,\ldots,j_n),
    \qquad
    G_n:=A_{:,J_n}^\top A_{:,J_n}.
\]
Then
\begin{equation*}
    (\det G_n)^{1/2}
    =
    \prod_{j=1}^n\rho_j.
\end{equation*}
\end{lemma}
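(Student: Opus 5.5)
The plan is to recognize column-pivoted QR as Gram--Schmidt applied to the selected columns $a_{j_1},\ldots,a_{j_n}$ of $A$ (where $a_j:=A_{:,j}$). Then $\rho_k$ will be the diagonal entry of the $R$ factor of $A_{:,J_n}$, and the identity follows from $G_n=R^\top R$.

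\textbf{Step 1: the $q_k$ are orthonormal.} We argue by induction on $k$. Suppose $q_1,\ldots,q_k$ are orthonormal and $\widehat E^{(k)}=(I-Q_kQ_k^\top)A$. This is the representation stated after \eqref{eq:qr-update}, and the induction verifies it as well. Every column of $\widehat E^{(k)}$ is orthogonal to the range of $Q_k$. Hence $q_{k+1}$ is a unit vector orthogonal to $q_1,\ldots,q_k$. Since $q_{k+1}\perp Q_k$, we have $(I-q_{k+1}q_{k+1}^\top)(I-Q_kQ_k^\top)=I-Q_{k+1}Q_{k+1}^\top$, which completes the induction. The hypothesis that the selected residual columns are nonzero ensures every $\rho_k>0$, so each $q_k$ is well defined.

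\textbf{Step 2: triangular factorization.} By construction,
\[
    a_{j_k}=(I-Q_{k-1}Q_{k-1}^\top)a_{j_k}+Q_{k-1}Q_{k-1}^\top a_{j_k}=\rho_k q_k+\sum_{i<k}(q_i^\top a_{j_k})\,q_i .
\]
So $A_{:,J_n}=Q_nR_n$, where $R_n\in\R^{n\times n}$ is upper triangular with $(R_n)_{ik}=q_i^\top a_{j_k}$ for $i<k$ and diagonal entries $(R_n)_{kk}=\rho_k$.

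\textbf{Step 3: conclude.} Since $Q_n^\top Q_n=I_n$, we get $G_n=R_n^\top Q_n^\top Q_nR_n=R_n^\top R_n$. Therefore
\[
    \det G_n=(\det R_n)^2=\prod_{k=1}^n\rho_k^2 .
\]
Because all $\rho_k>0$, taking square roots gives the claim.

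No step is really an obstacle. The only point requiring care is the orthonormality induction in Step 1, which justifies both the closed form of the residual and the triangular structure. An alternative route avoids Step 1's bookkeeping: write $\det G_n$ as the squared $n$-volume of the parallelepiped spanned by $a_{j_1},\ldots,a_{j_n}$, and use the fact that this volume equals the product of the successive distances $\rho_k=\operatorname{dist}(a_{j_k},\operatorname{span}\{a_{j_1},\ldots,a_{j_{k-1}}\})$.
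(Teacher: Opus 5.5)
Your proof is correct and follows the same route as the paper: factor $A_{:,J_n}=Q_nR_n$ with $R_n$ upper triangular and $(R_n)_{kk}=\rho_k$, so that $G_n=R_n^\top R_n$ and $(\det G_n)^{1/2}=\abs{\det R_n}=\prod_{k=1}^n\rho_k$. The only difference is that you prove the orthonormality of the $q_k$ and the triangular structure yourself by induction, whereas the paper cites Businger--Golub for this factorization.
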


\begin{proof}
Write
$
    A_{:,J_n}=Q_nR_n
$, where
$R_n$ is upper triangular and satisfies
$\abs{(R_n)_{jj}}=\rho_j$; see
\cite[p.~270]{BusingerGolub1965}. Hence
\[
    G_n=R_n^\top R_n,
    \qquad
    (\det G_n)^{1/2}
    =
    \abs{\det R_n}
    =
    \prod_{j=1}^n\rho_j.
\]
\end{proof}

Combining this identity with approximate greedy pivoting gives the
corresponding geometric-mean bound.

\begin{theorem}[Geometric-mean bound for QR]
\label{thm:pivoted-qr}
Let $A\in\R^{M\times N}$, and suppose that the first $n$ selected
residual columns in \eqref{eq:qr-update} are nonzero and satisfy
\eqref{eq:weak-qr-pivot}. Then
\begin{equation*}
    \norm{\widehat E^{(n-1)}}_{2,\infty}
    \le
    \gamma^{-1}(\det G_n)^{1/(2n)}
    \le
    \gamma^{-1}
    \left(\prod_{j=1}^n\sigma_j(A)\right)^{1/n}.
\end{equation*}
\end{theorem}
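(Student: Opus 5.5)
The proof will follow the LU argument in \cref{thm:matrix-convergence}, using \cref{lem:qr-pivot-product} in place of \cref{lem:algorithmic-reduction}. The one difference is that the statement bounds the single iterate $\widehat E^{(n-1)}$ rather than a running minimum. This works because QR residual column norms are monotone: since $\widehat E^{(k)}=(I-Q_kQ_k^\top)A$ with nested subspaces, each column satisfies
\[
\norm{\widehat E^{(k+1)}_{:,j}}_2=\norm{(I-q_{k+1}q_{k+1}^\top)\widehat E^{(k)}_{:,j}}_2\le\norm{\widehat E^{(k)}_{:,j}}_2 .
\]
Hence $\norm{\widehat E^{(k)}}_{2,\infty}$ is nonincreasing in $k$.

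For the first inequality, fix $j\in\{1,\ldots,n\}$. The approximate greedy condition \eqref{eq:weak-qr-pivot} gives $\norm{\widehat E^{(j-1)}}_{2,\infty}\le\gamma^{-1}\rho_j$. Monotonicity then gives $\norm{\widehat E^{(n-1)}}_{2,\infty}\le\norm{\widehat E^{(j-1)}}_{2,\infty}\le\gamma^{-1}\rho_j$ for every $j\le n$. So
\[
\norm{\widehat E^{(n-1)}}_{2,\infty}\le\gamma^{-1}\min_j\rho_j\le\gamma^{-1}\Bigl(\prod_{j=1}^n\rho_j\Bigr)^{1/n}=\gamma^{-1}(\det G_n)^{1/(2n)},
\]
where the last equality is \cref{lem:qr-pivot-product}.

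For the second inequality, note that $(\det G_n)^{1/2}=\prod_{j=1}^n\sigma_j(A_{:,J_n})$, because $\det(B^\top B)$ is the product of the squared singular values of $B=A_{:,J_n}$, which has $n$ columns. Column restriction does not increase singular values (interlacing, \cite[Corollary~7.3.6]{HornJohnson2013}), so $\sigma_j(A_{:,J_n})\le\sigma_j(A)$. Taking $n$th roots finishes the proof.

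The only step that needs care is the monotonicity of the residual $2,\infty$-norm, because that is what replaces the minimum over iterates in the LU bound. It follows directly from each update being an orthogonal projection applied to every column, so I do not expect a real obstacle here. The assumption that the selected columns are nonzero guarantees $\rho_j>0$, which makes the $q_j$ well defined and keeps $R_n$ invertible, so \cref{lem:qr-pivot-product} applies.
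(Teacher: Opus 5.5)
Your proposal is correct and follows the paper's proof: monotonicity of $\norm{\widehat E^{(k)}}_{2,\infty}$ under the orthogonal projections gives $\rho_j\ge\gamma\norm{\widehat E^{(n-1)}}_{2,\infty}$ for all $j\le n$, and the QR pivot-product identity together with singular-value interlacing under column restriction then completes the argument. The only cosmetic difference is that you pass through $\min_j\rho_j$ before taking the geometric mean, whereas the paper multiplies the $n$ inequalities directly.
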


\begin{proof}
Each update in \eqref{eq:qr-update} is an orthogonal projection, so
$\norm{\widehat E^{(k)}}_{2,\infty}$ is nonincreasing. Hence, for
$j=1,\ldots,n$,
\[
    \rho_j
    \ge
    \gamma\norm{\widehat E^{(j-1)}}_{2,\infty}
    \ge
    \gamma\norm{\widehat E^{(n-1)}}_{2,\infty}.
\]
Multiplying these inequalities and using
\cref{lem:qr-pivot-product} gives
\[
    \gamma^n
    \norm{\widehat E^{(n-1)}}_{2,\infty}^n
    \le
    \prod_{j=1}^n\rho_j
    =
    (\det G_n)^{1/2}.
\]
Taking $n$th roots proves the first inequality.

Finally,
\[
    (\det G_n)^{1/2}
    =
    \prod_{j=1}^n\sigma_j(A_{:,J_n})
    \le
    \prod_{j=1}^n\sigma_j(A),
\]
where the last inequality follows
from the monotonicity of singular
values under column restriction~
\cite[Corollary~7.3.6]{HornJohnson2013}.
\end{proof}

\subsection{Rates under singular-value decay}
\label{subsec:matrix-singular-value-rates}

The results for QR and LU in
\cref{thm:matrix-convergence,thm:pivoted-qr} are completely analogous,
so we give bounds for both of them at once.
Define
\[
    \varepsilon_n
    :=
    \begin{cases}
      \displaystyle
      \min_{0\le k\le n-1}\norm{E^{(k)}}_{\max},
      &\text{for LU},\\[3mm]
      \norm{\widehat E^{(n-1)}}_{2,\infty},
      &\text{for QR}.
    \end{cases}
\]
Then, in either case,
\begin{equation}
    \varepsilon_n
    \le
    \gamma^{-1}
    \left(\prod_{j=1}^n\sigma_j(A)\right)^{1/n}.
    \label{eq:common-geometric-mean-bound}
\end{equation}

\begin{corollary}[Rates under singular-value decay]
\label{cor:matrix-convergence-rates}
Under the assumptions of~\cref{thm:matrix-convergence} for LU, and
of~\cref{thm:pivoted-qr} for QR, suppose that
\[
    \sigma_j(A)\le Cj^{-p},
    \qquad
    j\ge1,
\]
for some $C>0$ and $p>0$. Then
\[
    \varepsilon_n
    \le
    \gamma^{-1}Ce^p n^{-p}.
\]
If instead
\[
    \sigma_j(A)\le C\rho^j,
    \qquad
    j\ge1,
\]
for some $C>0$ and $0<\rho<1$, then
\[
    \varepsilon_n
    \le
    \gamma^{-1}C\rho^{(n+1)/2}.
\]
\end{corollary}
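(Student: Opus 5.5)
The plan is to start from the common geometric-mean bound \eqref{eq:common-geometric-mean-bound}, which holds for both LU and QR, and substitute each decay hypothesis into the product $\prod_{j=1}^n\sigma_j(A)$. Since every $\sigma_j(A)$ is nonnegative and the map $x\mapsto x^{1/n}$ is monotone, upper bounds on the individual singular values can be multiplied together and the $n$th root taken. This reduces the whole statement to two elementary estimates on explicit products.

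For algebraic decay, the product is bounded by
\[
    \prod_{j=1}^n Cj^{-p}=C^n (n!)^{-p},
\]
so $\varepsilon_n\le\gamma^{-1}C(n!)^{-p/n}$. It then suffices to show $(n!)^{1/n}\ge n/e$, which is equivalent to $n!\ge n^ne^{-n}$. This follows from the series $e^n=\sum_k n^k/k!\ge n^n/n!$, or alternatively from comparing $\sum_{j=1}^n\log j$ with $\int_0^n\log x\,dx = n\log n - n$. Raising to the power $-p$ gives $(n!)^{-p/n}\le e^pn^{-p}$, which is the claimed first bound.

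For geometric decay, the product is bounded by
\[
    \prod_{j=1}^n C\rho^j=C^n\rho^{n(n+1)/2}.
\]
Taking the $n$th root gives $C\rho^{(n+1)/2}$, and multiplying by $\gamma^{-1}$ yields the second bound.

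The only step needing any care is the Stirling-type lower bound $n!\ge (n/e)^n$, and it is handled by the one-line exponential series argument above. Everything else is direct substitution into \eqref{eq:common-geometric-mean-bound}.
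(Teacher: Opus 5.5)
Your proposal is correct and matches the paper's proof: substitute the decay hypotheses into \eqref{eq:common-geometric-mean-bound}, use $n!\ge(n/e)^n$ in the algebraic case, and use $1+\cdots+n=n(n+1)/2$ in the geometric case. Your justification of the Stirling-type bound via $e^n\ge n^n/n!$ is a detail the paper leaves implicit.
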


\begin{proof}
By \eqref{eq:common-geometric-mean-bound}, under algebraic singular-value
decay,
\[
    \left(\prod_{j=1}^n\sigma_j(A)\right)^{1/n}
    \le
    C(n!)^{-p/n}
    \le
    Ce^p n^{-p},
\]
where we used $n!\ge(n/e)^n$. This gives the first estimate.

Under geometric singular-value decay,
\[
    \left(\prod_{j=1}^n\sigma_j(A)\right)^{1/n}
    \le
    C\rho^{(1+\cdots+n)/n}
    =
    C\rho^{(n+1)/2},
\]
which gives the second estimate.
\end{proof}

To the best of our knowledge, the LU rates in
\cref{cor:matrix-convergence-rates} are new.
However,
similar rates for QR were
already established in the reduced-basis literature in the language of Kolmogorov widths, where pivoted
QR is known as the (weak) greedy algorithm for reduced bases
\cite{BinevEtAl2011,DeVorePetrovaWojtaszczyk2013}.
In the present setting, the weak greedy algorithm for reduced bases is
applied to the finite set of columns
$
    \mathcal F_A
    :=
    \{A_{:,j}:1\le j\le N\}
    \subset\R^M.
$
The $n$th Kolmogorov width of this set is
\[
    d_n(\mathcal F_A)
    :=
    \inf_{\substack{V\subset\R^M\\ \dim V\le n}}
    \sup_{f\in\mathcal F_A}
    \inf_{v\in V}\norm{f-v}_2,
\]
where the infimum is taken over linear subspaces of $\R^M$, and its
greedy error after $k$ steps is precisely
$\norm{\widehat E^{(k)}}_{2,\infty}$.
If $U_m$ contains the first $m$ left singular vectors of $A$, then
\[
\begin{aligned}
    d_m(\mathcal F_A)
    &\le
    \max_j
    \norm{(I-U_mU_m^\top)A_{:,j}}_2
    &\le
    \norm{(I-U_mU_m^\top)A}_2
    =
    \sigma_{m+1}(A).
\end{aligned}
\]
Thus, singular-value decay implies the corresponding decay of the
Kolmogorov widths. Under $\sigma_j(A)\le Cj^{-p}$,
\cite[Corollary~3.3(ii)]{DeVorePetrovaWojtaszczyk2013} gives
\[
    \norm{\widehat E^{(k)}}_{2,\infty}
    \le
    2^{5p+1}\gamma^{-2}Ck^{-p}.
\]
In comparison, taking $n=k+1$ in
\cref{cor:matrix-convergence-rates} gives
\[
    \norm{\widehat E^{(k)}}_{2,\infty}
    \le
    Ce^p\gamma^{-1}(k+1)^{-p}.
\]
Thus, our estimates give the same algebraic rate with a smaller
explicit constant and linear, rather than quadratic, dependence on
$\gamma^{-1}$.

For geometric decay,
\cite[Corollary~3.3(i)]{DeVorePetrovaWojtaszczyk2013} shows, for $k\ge2$,
\[
\begin{aligned}
    \norm{\widehat E^{(k)}}_{2,\infty}
    &\le
    \sqrt2\,C\gamma^{-1}\rho^{(k+3)/4},
\end{aligned}
\]
when $\sigma_j(A)\le C\rho^j$. At the same iterate,
\cref{cor:matrix-convergence-rates} instead gives
\[
    \norm{\widehat E^{(k)}}_{2,\infty}
    \le
    C\gamma^{-1}\rho^{(k+2)/2}.
\]
Thus, our estimate has a sharper geometric exponent.
For exact pivoting $(\gamma=1)$, however,
\cite[Theorem~4.4]{BinevEtAl2011} provides the estimate
\[
    \norm{\widehat E^{(k)}}_{2,\infty}
    \le
    \frac{C}{\sqrt3}(2\rho)^{k+1}.
\]
This guarantees convergence only when $\rho<1/2$, but has a better asymptotic
geometric factor than our bound when $\rho<1/4$.

\section{Pivoted LU for functions}
\label{sec:function-case}

\subsection{Notation}
We now consider pivoted LU applied to functions.
Let $\mathcal X$ and $\mathcal Y$ be nonempty compact sets, and let
$f:\mathcal X\times\mathcal Y\to\R$ be continuous.
Gaussian elimination applied to $f$ generates the
iteration
\begin{equation*}
\begin{aligned}
    e_0(x,y)&:=f(x,y),\\
    e_{k+1}(x,y)
    &:=
    e_k(x,y)
    -
    \frac{
      e_k(x,y_{k+1})e_k(x_{k+1},y)
    }{
      e_k(x_{k+1},y_{k+1})
    }.
\end{aligned}
\end{equation*}
Here $(x_{k+1},y_{k+1})\in\mathcal X\times\mathcal Y$ is the pivot
location, and
\[
    p_{k+1}:=e_k(x_{k+1},y_{k+1})
\]
is the pivot. We say that the iteration uses approximate greedy pivoting
if, for some fixed $0<\gamma\le1$,
\begin{equation}
    \abs{p_{k+1}}
    \ge
    \gamma
    \norm{e_k}_{L^\infty(\mathcal X\times\mathcal Y)}.
    \label{eq:weak-pivot}
\end{equation}
Here
$\norm{g}_{L^\infty(\mathcal X\times\mathcal Y)}
:=
\sup_{(x,y)\in\mathcal X\times\mathcal Y}\abs{g(x,y)}$.
If $e_k\equiv0$, the algorithm terminates. Otherwise, continuity and
compactness imply that a
nonzero pivot satisfying \eqref{eq:weak-pivot} exists.

To obtain bounds in the function case, we will use the same determinant argument as in the matrix case. Indeed, the
pivot-product identity in \cref{lem:algorithmic-reduction} depends only on
the values of $f$ at the selected pivot coordinates. Thus, if the first
$n$ pivots are nonzero and
$A_n:=[f(x_i,y_j)]_{i,j=1}^n$, then
\[
    \det A_n=\prod_{j=1}^n p_j.
\]
Combining this identity with approximate greedy pivoting, as in the proof
of \cref{thm:matrix-convergence}, gives
\begin{equation}
    \min_{0\le k\le n-1}
    \norm{e_k}_{L^\infty(\mathcal X\times\mathcal Y)}
    \le
    \gamma^{-1}\abs{\det A_n}^{1/n}.
    \label{eq:best-residual-from-determinant}
\end{equation}

To obtain explicit convergence rates, we will bound the sampled
determinants uniformly. Before this, we note that bounds obtained this way are inherited by restrictions of the
domain. Indeed, let
$\widetilde{\mathcal X}\subset\mathcal X$ and
$\widetilde{\mathcal Y}\subset\mathcal Y$ be nonempty compact subsets.
If the algorithm is applied to the restriction of $f$ to
$\widetilde{\mathcal X}\times\widetilde{\mathcal Y}$, then
\[
\begin{aligned}
    \abs{\det A_n}^{1/n}
    &\le
    \sup_{\substack{
      x_1,\ldots,x_n\in\widetilde{\mathcal X}\\
      y_1,\ldots,y_n\in\widetilde{\mathcal Y}
    }}
    \abs{\det[f(x_i,y_j)]_{i,j=1}^n}^{1/n}\\
    &\le
    \sup_{\substack{
      x_1,\ldots,x_n\in\mathcal X\\
      y_1,\ldots,y_n\in\mathcal Y
    }}
    \abs{\det[f(x_i,y_j)]_{i,j=1}^n}^{1/n}.
\end{aligned}
\]
Thus, any determinant bound established on
$\mathcal X\times\mathcal Y$ also applies when the algorithm is run on a
restriction of the domain, and in particular on a tensor-product grid
(i.e., on a matrix).

\subsection{H\"older continuous and differentiable functions}
\label{subsec:one-dimensional-holder}

From this point on, we take
$\mathcal X=\mathcal Y=[-1,1]$. However, we note that the one-sided determinant bounds below
use no structure in the second variable and remain valid with the second
copy of $[-1,1]$ replaced by any compact set $\mathcal Y$.

We first define the regularity seminorms used below.
For a scalar function $g:[-1,1]\to\R$ and $0<\alpha\le1$, define its
$\alpha$-H\"older seminorm by
\[
    \seminorm{g}_{\alpha}
    :=
    \sup_{\substack{u,v\in [-1, 1]\\u\ne v}}
    \frac{\abs{g(u)-g(v)}}{\abs{u-v}^{\alpha}}.
\]

For $h:[-1, 1]^2\to\R$, define the uniform one-sided seminorms
\[
    \seminorm{h}_{\alpha}^{(x)}
    :=
    \sup_{y\in [-1, 1]}\seminorm{h(\cdot,y)}_{\alpha},
    \qquad
    \seminorm{h}_{\beta}^{(y)}
    :=
    \sup_{x\in [-1, 1]}\seminorm{h(x,\cdot)}_{\beta}.
\]

For $h:[-1,1]^2\to\R$ and $0<\alpha,\beta\le1$, define the mixed
H\"older seminorm by
\[
    \seminorm{h}_{\alpha,\beta}^{(\mathrm{mix})}
    :=
    \sup_{\substack{x,x'\in [-1, 1],\ x\ne x'\\
                    y,y'\in [-1, 1],\ y\ne y'}}
    \frac{
      \abs{
        h(x,y)-h(x',y)-h(x,y')+h(x',y')
      }
    }{
      \abs{x-x'}^\alpha\abs{y-y'}^\beta
    }.
\]

The integers $q_x$ and $q_y$ below denote the number of derivatives
taken in each variable.\footnote{Here and below, $\mathbb N$ includes
zero, with $\partial_x^0f=\partial_y^0f=f$.}
Thus,
$\seminorm{\partial_x^{q_x}f}_{\alpha_x}^{(x)}<\infty$
means that the $q_x$th derivative is uniformly
$\alpha_x$-H\"older in $x$. We can now state the relevant determinant bounds.

\begin{theorem}[H\"older determinant bounds]
\label{thm:integer-determinant}
Let $n\ge1$.

\begin{enumerate}
\item
Let $q_x\in\mathbb N$ and $0<\alpha_x\le1$.
Suppose that $\partial_x^{q_x}f$ exists on $[-1,1]^2$ and that
\[
    M
    :=
    \max\left\{
      \norm{f}_{L^\infty([-1,1]^2)},
      \seminorm{\partial_x^{q_x}f}_{\alpha_x}^{(x)}
    \right\}
    <\infty.
\]
Then, for arbitrary
$x_1,\ldots,x_n,y_1,\ldots,y_n\in [-1, 1]$,
\begin{equation}
    \abs{\det[f(x_i,y_j)]_{i,j=1}^n}^{1/n}
    \le
    \bigl(3(q_x+1)\bigr)^{q_x+\alpha_x}
    M n^{1/2-(q_x+\alpha_x)}.
    \label{eq:one-sided-holder-determinant}
\end{equation}

\item
Let $q_x,q_y\in\mathbb N$ and
$0<\alpha_x,\alpha_y\le1$. Suppose that
$\partial_x^{q_x}f$, $\partial_y^{q_y}f$, and
$\partial_y^{q_y}(\partial_x^{q_x}f)$ exist on $[-1,1]^2$ and that
\[
\begin{aligned}
    M
    :=
    \max\biggl\{&
      \norm{f}_{L^\infty([-1,1]^2)},
      \seminorm{\partial_x^{q_x}f}_{\alpha_x}^{(x)},
      \seminorm{\partial_y^{q_y}f}_{\alpha_y}^{(y)},\\
      &\seminorm{
        \partial_y^{q_y}\bigl(\partial_x^{q_x}f\bigr)
      }_{\alpha_x,\alpha_y}^{(\mathrm{mix})}
    \biggr\}
    <\infty.
\end{aligned}
\]
Then, for arbitrary $x_1,\ldots,x_n,y_1,\ldots,y_n\in [-1, 1]$,
\begin{equation}
\begin{aligned}
    \abs{\det[f(x_i,y_j)]_{i,j=1}^n}^{1/n}
    \le{}&
    \bigl(3(q_x+1)\bigr)^{q_x+\alpha_x}
    \bigl(3(q_y+1)\bigr)^{q_y+\alpha_y}
    M\\
    &\times
    n^{1/2-(q_x+\alpha_x)-(q_y+\alpha_y)}.
    \label{eq:integer-determinant}
\end{aligned}
\end{equation}
\end{enumerate}
\end{theorem}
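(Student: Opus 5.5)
The plan is to bound the determinant by Hadamard's inequality after subtracting from each column a function that lies in a low-dimensional space and therefore does not change the determinant. This is done via a determinant-preserving block reduction.

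\textbf{Part 1.} Fix the points and write $A=[f(x_i,y_j)]$. Sort the $x_i$ increasingly. Split $[-1,1]$ into $m$ consecutive intervals, each containing about $n/m$ of the points $x_i$. On each interval, replace each column $c_j(x)=f(x,y_j)$ by its Taylor-type local polynomial of degree $q_x$, for instance the interpolant at $q_x+1$ of the points $x_i$ in that block. Standard interpolation error with a Hölder $q_x$-th derivative, together with Markov/Lebesgue constants, gives
\[
\abs{c_j(x_i)-P_j(x_i)}\lesssim (3(q_x+1))^{q_x+\alpha_x}M\,h^{q_x+\alpha_x},
\]
where $h$ is the block length. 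I would rather avoid dependence on the block length and instead choose the blocks so that their lengths sum to at most $2$.

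Rather than polynomials, the cleaner route is as follows. The rows indexed by the interpolation nodes within a block span the part of each column that is polynomial on that block. Subtracting suitable combinations of those rows from the other rows of the block is a determinant-preserving row operation. After this, the remaining rows of the block have entries bounded by the interpolation error. The node rows themselves are bounded by $M$, and there are $m(q_x+1)$ of them.

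Hadamard's inequality on rows, $\abs{\det A}\le\prod_i\norm{A_{i,:}}_2$, then gives
\[
\abs{\det A}\le (\sqrt n M)^{m(q_x+1)}\prod_{\text{blocks }b}\prod_{i\in b}\bigl(\sqrt n\,C M h_b^{q_x+\alpha_x}\bigr).
\]
With $\sum_b h_b\le 2$ and AM--GM, the product over blocks is maximized at equal lengths $h_b=2/m$. Choosing $m\approx n/(3(q_x+1))$ balances the terms and yields $\abs{\det A}^{1/n}\le \sqrt n\,M\,(3(q_x+1)/n)^{q_x+\alpha_x}$ up to constants. This is where the $3(q_x+1)$ comes from: each block has roughly $3(q_x+1)$ points, and its length is about $6(q_x+1)/n$. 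The residual rows use differences $x_i-x_{\text{node}}$ bounded by the block length. I expect the bookkeeping (blocks possibly with fewer than $q_x+1$ points, the exact constant $3$) to be the fiddly part, but not conceptually hard.

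\textbf{Part 2.} Apply the same reduction in $y$ to the columns after the row reduction. The row operations leave the residual rows as differences $f(x_i,\cdot)-\sum_\ell w_{i\ell}f(x_\ell,\cdot)$, which are interpolation remainders in $x$. Performing the analogous column reduction in $y$, the doubly reduced entries are mixed remainders. These are bounded through the mixed Hölder seminorm of $\partial_y^{q_y}\partial_x^{q_x}f$ by $CMh_x^{q_x+\alpha_x}h_y^{q_y+\alpha_y}$. Entries that are reduced in only one direction are bounded using the one-sided seminorms.

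I would then apply Hadamard's inequality to the row-reduced, column-reduced matrix, weighted by scaling rows and columns. Concretely, divide each residual row by $h_x^{q_x+\alpha_x}$ and each residual column by $h_y^{q_y+\alpha_y}$. After this scaling every entry is $\lesssim M$, and Hadamard gives $(\sqrt n M)^n$ times the product of the scalings. The main obstacle is verifying that the mixed remainder, meaning the tensor product of the two interpolation error operators applied to $f$, is controlled by the mixed seminorm with the same constants as in one dimension. I would handle this by writing the remainder operator in $x$ as a divided-difference/Peano form and noting that it commutes with the $y$-remainder operator.
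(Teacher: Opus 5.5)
There is a genuine gap, and it is structural rather than a matter of bookkeeping: the block scheme cannot produce the exponent $1/2-(q_x+\alpha_x)$.

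Write $q:=q_x+1$ and $\mu:=q_x+\alpha_x$. The $q$ node rows of every block are never reduced, so $mq$ rows carry only the bound $M\sqrt n$. Take blocks of $s>q$ points, so $m=n/s$, with lengths $h_b$ satisfying $\sum_b h_b\le 2$. Even with interpolation constant $1$, your Hadamard bound gives
\[
    \abs{\det A}^{1/n}
    \le
    \sqrt n\,M\Bigl(\prod_b h_b\Bigr)^{\mu(s-q)/n}
    \le
    \sqrt n\,M\Bigl(\frac{2s}{n}\Bigr)^{\mu(1-q/s)}.
\]
For your choice $s\approx 3q$ this is of order $n^{1/2-2\mu/3}$, not $n^{1/2-\mu}$. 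For equispaced points all $h_b$ are about $2s/n$, so the loss is not an artifact of the AM--GM step.

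Any fixed $s$ forfeits the fraction $q/s$ of the exponent. Letting $s$ grow instead costs a factor $s^{\mu}$, and the best trade-off ($s$ of order $q\log n$) still leaves a factor of order $(\log n)^{\mu}$. So no choice of $m$ yields the stated bound, and the constant $3(q_x+1)$ does not come from a block size. Lebesgue or Markov constants are also not needed, and would be harmful for extrapolation. The Rolle/mean-value argument bounds the remainder by $\seminorm{g^{(q_x)}}_{\alpha_x}h^{q_x+\alpha_x}$ with constant $1$, where $h$ is the span of the nodes together with the evaluation point.

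The missing idea is to use overlapping sliding windows instead of disjoint blocks. After sorting, for every $i>q$ subtract from row $i$ the Lagrange combination of the original rows $i-q,\dots,i-1$. Row $i$ is changed only by earlier rows and keeps coefficient $1$. Hence the composite map is unit lower triangular and preserves the determinant, even though the windows overlap.

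Now only the first $q$ rows are unreduced, and row $i>q$ carries the factor $(x_i-x_{i-q})^{\mu}$. Each gap $x_{j+1}-x_j$ lies in at most $q$ of these spans, so $\sum_{i>q}(x_i-x_{i-q})\le 2q$. AM--GM applied to the $n$ numbers consisting of $q$ ones and these $n-q$ spans then gives $(3q/n)^{\mu}$. This is where $3(q_x+1)=q+2q$ comes from.

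With the same construction in $y$, the rest of your Part~2 plan goes through as in the paper: scale rows and columns by these factors and apply Hadamard. The paper bounds the doubly reduced entries by applying the one-variable Rolle estimate twice. It first works in $y$ on $\partial_x^{q_x}f(u,\cdot)-\partial_x^{q_x}f(v,\cdot)$, which controls the H\"older seminorm of the $q_x$th $x$-derivative of the $y$-remainder, and then works in $x$.
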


The proof of \cref{thm:integer-determinant} is given in
\cref{sec:proof-differentiable-determinant}. Combining the determinant
bounds with \eqref{eq:best-residual-from-determinant} gives the
corresponding convergence rates.

\begin{corollary}[H\"older convergence rates]
\label{cor:functional-holder-rates}
 
Let $e_k$ denote the residuals generated by the LU iteration
with approximate greedy pivoting applied to $f$, and suppose that the first $n$
pivots are nonzero.

\begin{enumerate}
\item
Under the assumptions of part~1 of
\cref{thm:integer-determinant},
\[
    \min_{0\le k\le n-1}
    \norm{e_k}_{L^\infty([-1,1]^2)}
    \le
    \gamma^{-1}
    \bigl(3(q_x+1)\bigr)^{q_x+\alpha_x}
    M n^{1/2-(q_x+\alpha_x)}.
\]

\item
Under the assumptions of part~2 of
\cref{thm:integer-determinant},
\[
\begin{aligned}
    \min_{0\le k\le n-1}
    \norm{e_k}_{L^\infty([-1,1]^2)}
    \le{}&
    \gamma^{-1}
    \bigl(3(q_x+1)\bigr)^{q_x+\alpha_x}
    \bigl(3(q_y+1)\bigr)^{q_y+\alpha_y}
    M\\
    &\times
    n^{1/2-(q_x+\alpha_x)-(q_y+\alpha_y)}.
\end{aligned}
\]
\end{enumerate}
\end{corollary}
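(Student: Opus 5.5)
This corollary follows by direct substitution, so the plan is short. The argument combines the determinant-to-residual inequality \eqref{eq:best-residual-from-determinant} with the uniform determinant bounds of \cref{thm:integer-determinant}. First, I would check that \eqref{eq:best-residual-from-determinant} applies. By hypothesis, the first $n$ pivots are nonzero and satisfy the approximate greedy condition \eqref{eq:weak-pivot}. The pivot-product identity of \cref{lem:algorithmic-reduction} depends only on the pivot values $f(x_i,y_j)$ at the selected coordinates, so it transfers verbatim to the functional iteration. This gives $\det A_n=\prod_{j=1}^n p_j$ with $A_n=[f(x_i,y_j)]_{i,j=1}^n$.

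The chain from the proof of \cref{thm:matrix-convergence} then yields
\[
    \min_{0\le k\le n-1}\norm{e_k}_{L^\infty([-1,1]^2)}
    \le
    \gamma^{-1}\min_{1\le j\le n}\abs{p_j}
    \le
    \gamma^{-1}\abs{\det A_n}^{1/n}.
\]
The first step uses \eqref{eq:weak-pivot} at each step $j-1$. The second bounds the minimum by the geometric mean.

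Second, I would apply \cref{thm:integer-determinant}. The pivot locations $x_1,\ldots,x_n,y_1,\ldots,y_n$ produced by the algorithm are simply some points of $[-1,1]$. The determinant bounds \eqref{eq:one-sided-holder-determinant} and \eqref{eq:integer-determinant} hold for arbitrary such points, so they apply to $\abs{\det A_n}^{1/n}$ in particular. Under the hypotheses of part~1, substituting \eqref{eq:one-sided-holder-determinant} into the display above gives the first claimed estimate. Under the hypotheses of part~2, substituting \eqref{eq:integer-determinant} gives the second.

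I do not expect any real obstacle here, since all the analytic content sits in \cref{thm:integer-determinant}. The only point needing care is the transfer of \cref{lem:algorithmic-reduction} to functions. I would justify it by noting that running the function iteration and then restricting to the grid $\{x_i\}\times\{y_j\}$ commutes with the update: each update only uses values of $e_k$ at the points $(x,y_{k+1})$ and $(x_{k+1},y)$. So the pivots coincide with those of the matrix iteration on $A_n$ with pivots on the diagonal, and the matrix lemma applies.
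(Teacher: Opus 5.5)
Your proposal is correct and takes the same approach as the paper: you insert the determinant bounds of \cref{thm:integer-determinant}, which hold at arbitrary sample points, into the determinant-to-residual inequality \eqref{eq:best-residual-from-determinant}. Your restriction-to-the-grid argument for transferring the pivot-product identity is a more explicit version of the paper's remark that the identity depends only on the values of $f$ at the pivot coordinates.
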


Thus, the one-sided estimate guarantees convergence whenever $f$ is merely $\alpha_x$-H\"older continuous in one variable, with
$\alpha_x>1/2$. If $f$ is Lipschitz in one variable, the one-sided
estimate gives the rate $\mathcal O(n^{-1/2})$. If $f$ satisfies the
mixed Lipschitz condition, the mixed estimate gives
$\mathcal O(n^{-3/2})$.

If $f$ is $q$ times continuously differentiable, uniformly in the other
variable, then the one-sided estimate gives the
rate
$
    \mathcal O\bigl(n^{-(q-1/2)}\bigr).
$
If, in addition, $f$ has continuous mixed derivatives up to order $q$ in
each variable, then the mixed estimate gives the improved rate
$
    \mathcal O\bigl(n^{-(2q-1/2)}\bigr).
$
We highlight that differentiability in the two variables separately gives only the
one-sided rate unless the corresponding mixed regularity is also
available.

To the best of our knowledge, these are the first general convergence
rate results for pivoted LU applied to functions that require neither
analyticity nor positive definiteness. In the positive-definite setting,
related rates are known,
see \cite{JeongTownsend2025,SantinHaasdonk2018}.

\subsection{Analytic functions}
\label{subsec:one-dimensional-analytic}

Finally, we give the corresponding convergence rates for analytic
functions. We first establish some notation.

For $\rho>1$, let $\cE_\rho$
denote the open Bernstein ellipse with foci at $\pm1$ and parameter
$\rho$.
Suppose that, for every $y\in[-1,1]$, the slice $f(\cdot,y)$ extends
to a holomorphic function
$
    \widetilde f_y:\cE_\rho\to\C.
$
Define
\[
    \mathcal A_\rho^{(x)}(f)
    :=
    \sup_{y\in[-1,1]}
    \sup_{z\in\cE_\rho}
    \abs{\widetilde f_y(z)}.
\]

We also use joint analyticity in both variables. Let
$\rho_x,\rho_y>1$, and suppose that $f$ extends to a holomorphic function
$
    \widetilde f:
    \cE_{\rho_x}\times\cE_{\rho_y}\to\C.
$
Define
\[
    \mathcal A_{\rho_x,\rho_y}^{(x,y)}(f)
    :=
    \sup_{(z,w)\in\cE_{\rho_x}\times\cE_{\rho_y}}
    \abs{\widetilde f(z,w)}.
\]
We can now state the determinant bounds.
\begin{theorem}[Analytic determinant bounds]
\label{thm:analytic-determinant}
Let $n\ge1$.
\begin{enumerate}

\item
Let $\rho>1$ and suppose that
$\mathcal A_\rho^{(x)}(f)\le M$. Then, for arbitrary
$x_1,\ldots,x_n,y_1,\ldots,y_n\in [-1, 1]$,
\begin{equation}
    \abs{\det[f(x_i,y_j)]_{i,j=1}^n}^{1/n}
    \le
    \frac{2M\sqrt n}{\sqrt{1-\rho^{-2}}}
    \rho^{-(n-1)/2}.
    \label{eq:analytic-determinant}
\end{equation}

\item
Let $\rho_x,\rho_y>1$, and suppose that
$\mathcal A_{\rho_x,\rho_y}^{(x,y)}(f)\le M$. Then, for arbitrary
$x_1,\ldots,x_n,y_1,\ldots,y_n\in [-1, 1]$,
\begin{equation}
    \abs{\det[f(x_i,y_j)]_{i,j=1}^n}^{1/n}
    \le
    \frac{4M\sqrt n}
         {\sqrt{(1-\rho_x^{-2})(1-\rho_y^{-2})}}
    (\rho_x\rho_y)^{-(n-1)/2}.
    \label{eq:mixed-analytic-determinant}
\end{equation}

\end{enumerate}
\end{theorem}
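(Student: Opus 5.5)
Expand each slice $f(\cdot,y)$ in Chebyshev polynomials in $x$ and use the decay of the Chebyshev coefficients to factor the sampled matrix as a product of a Chebyshev–Vandermonde-type matrix and a coefficient matrix. The determinant is then bounded with Cauchy--Binet and Hadamard's inequality.

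\textbf{Part 1: the Chebyshev expansion.} For each $y$, write
\[
    f(x,y)=\sum_{k\ge0} c_k(y)T_k(x).
\]
The classical Bernstein-ellipse estimate gives $\abs{c_k(y)}\le 2M\rho^{-k}$ uniformly in $y$. Set $A=[f(x_i,y_j)]_{i,j=1}^n$. Then $A=TC$, where $T_{ik}=T_k(x_i)$ and $C_{kj}=c_k(y_j)$, with $k$ ranging over $\mathbb N$. The series converges absolutely, so Cauchy--Binet extends to this infinite inner dimension:
\[
    \det A=\sum_{0\le k_1<\cdots<k_n}\det T_{:,K}\,\det C_{K,:}.
\]

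\textbf{Part 1: bounding each term.} Since $\abs{T_k}\le1$ on $[-1,1]$, Hadamard's inequality gives $\abs{\det T_{:,K}}\le n^{n/2}$. For the coefficient block, apply Hadamard by rows: row $k_\ell$ of $C_{K,:}$ has Euclidean norm at most $\sqrt n\,2M\rho^{-k_\ell}$. Hence
\[
    \abs{\det C_{K,:}}\le(2M\sqrt n)^n\rho^{-\sum_\ell k_\ell}.
\]
This is one factor $\sqrt n$ too many. To fix it, apply Hadamard to only one of the two factors and use $\abs{\det C_{K,:}}\le\prod_\ell(\text{row norm})$ more carefully. The cleaner route is Cauchy--Schwarz on the Cauchy--Binet sum with suitable weights. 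Concretely, write $A=(TD)(D^{-1}C)$ with $D=\operatorname{diag}(\rho^{-k/2})$, and bound
\[
    \abs{\det A}\le \det\bigl((TD)(TD)^\top\bigr)^{1/2}\det\bigl((D^{-1}C)^\top(D^{-1}C)\bigr)^{1/2}.
\]
Each Gram matrix is then estimated by Hadamard on its diagonal. Row $i$ of $TD$ has squared norm at most $\sum_k\rho^{-k}$. Column $j$ of $D^{-1}C$ has squared norm at most $\sum_k 4M^2\rho^{-k}$. Together these give a bound of the form $(\text{const}\cdot M)^n$ with no decay, so the weighting must be unbalanced to capture $\rho^{-n(n-1)/2}$.

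\textbf{Part 1: the main obstacle and the fix.} The decay comes from the sum over $K$: the minimal exponent is $\sum_\ell k_\ell\ge 0+1+\cdots+(n-1)=n(n-1)/2$. Combined with the row-Hadamard bound, this gives
\[
    \abs{\det A}\le n^{n/2}(2M)^n\sum_K\rho^{-\sum k_\ell}\;(\text{times possibly } n^{n/2}).
\]
The generating function $\sum_K\rho^{-\sum k_\ell}=\rho^{-n(n-1)/2}\prod_{\ell=1}^n(1-\rho^{-\ell})^{-1}$ produces the geometric factor. I expect the main work to be arranging the estimates so that only a single $n^{n/2}$ appears and the product constant becomes $(1-\rho^{-2})^{-n/2}$. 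The target $\sqrt n/\sqrt{1-\rho^{-2}}$ suggests $\ell^2$ summation. So I will first try shifting: write $\det A=\det(TC)$ and bound it via Cauchy--Binet followed by Cauchy--Schwarz,
\[
    \abs{\det A}\le\Bigl(\sum_K\abs{\det T_{:,K}}^2\rho^{-\sum k_\ell}\Bigr)^{1/2}\Bigl(\sum_K\abs{\det C_{K,:}}^2\rho^{\sum k_\ell}\Bigr)^{1/2},
\]
or, alternatively, reduce to Hadamard on the columns of $C$ in the weighted $\ell^2$ norm, using $\sum_k\abs{c_k}^2\rho^{2k}\le 4M^2/(1-\rho^{-2})$ up to constants. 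The latter is the Parseval-type bound on the ellipse that produces exactly $1/\sqrt{1-\rho^{-2}}$. With this bound on $C$, Hadamard on the weighted columns gives $(2M/\sqrt{1-\rho^{-2}})^n$. The Chebyshev side, after extracting $\rho^{-k}$ weights, is bounded using the Cauchy--Binet minimal exponent and $\abs{T_k}\le1$. This yields $n^{n/2}\rho^{-n(n-1)/2}$, and taking $n$th roots gives \eqref{eq:analytic-determinant}.

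\textbf{Part 2.} Expand in both variables, $f=\sum_{k,l}a_{kl}T_k(x)T_l(y)$, and write $A=T_xBT_y^\top$ with the weighted Parseval bound on $B$ from joint analyticity. Applying the same Cauchy--Binet argument on both sides gives the decay factors $\rho_x^{-n(n-1)/2}$ and $\rho_y^{-n(n-1)/2}$ separately. The constants $2\cdot2=4$ and the two $(1-\rho^{-2})^{-1/2}$ factors arise in the same way as in part 1. The $\sqrt n$ appears only once, because Hadamard is applied to a single $n\times n$ Chebyshev block while the coefficient block is controlled in weighted Frobenius norm.
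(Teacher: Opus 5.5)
For part 1, the alternative you settle on is essentially the paper's proof: put the full weight $\rho^{-k}$ on the Chebyshev factor, use $\abs{\det(XY)}\le\sqrt{\det(XX^*)\det(Y^*Y)}$, bound the coefficient Gram by Hadamard, and bound the Chebyshev Gram by Cauchy--Binet. However, your accounting of the constant is wrong in two places that happen to cancel.

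First, the pointwise bound $\abs{c_k(y)}\le 2M\rho^{-k}$ you start from gives only $\rho^{2k}\abs{c_k(y)}^2\le 4M^2$ termwise, which is not summable. You need the Parseval bound: expand $f(J(w),y)$ in a Laurent series on $\rho^{-1}<\abs{w}<\rho$, apply Parseval on $\abs{w}=r$, discard the $r^{-2k}$ terms, and let $r\uparrow\rho$. This gives $\sum_k\rho^{2k}\abs{c_k(y)}^2\le 4M^2$, with no factor $(1-\rho^{-2})^{-1}$, so each weighted coefficient column has norm at most $2M$.

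Second, the Chebyshev side is not bounded by $n^{n/2}\rho^{-n(n-1)/2}$; for $n=1$ and $x_1=1$ it equals $(1-\rho^{-2})^{-1/2}$. You must pay for the whole sum over $K$. One way is as in the paper, via $\det(UU^*)\le\prod_{r=0}^{n-1}\sum_{m\ge r}\norm{u_m}_2^2$ (Hadamard in each Cauchy--Binet term plus $m_j\ge j-1$). Another is your generating function $\sum_K\rho^{-2\sum_\ell k_\ell}=\rho^{-n(n-1)}\prod_{\ell=1}^n(1-\rho^{-2\ell})^{-1}$. Either way, this is where $(1-\rho^{-2})^{-1/2}$ really comes from. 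If you combine a corrected Chebyshev bound with your loose coefficient bound, you overshoot the stated constant.

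The genuine gap is in part 2. In $A=UCV^\top$, each Chebyshev factor needs its own Cauchy--Binet estimate to produce $\rho_x^{-n(n-1)/2}$, respectively $\rho_y^{-n(n-1)/2}$, and each contributes $n^{n/2}$. That factor is genuinely present: at Chebyshev nodes, $\abs{\det[T_k(x_i)]_{i=1,\ldots,n;\,k=0,\ldots,n-1}}=2^{-(n-1)/2}n^{n/2}$. So Hadamard is not applied to a single Chebyshev block, and a single $\sqrt n$ survives only if the coefficient matrix supplies a factor $n^{-n/2}$.

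Your part-1 mechanism does not supply it. Expanding $\det(UCV^\top)=\sum_{K,L}\det U_{:,K}\det C_{K,L}\det V_{:,L}$ and applying Cauchy--Schwarz over $(K,L)$ leaves $\bigl(\sum_{K,L}\abs{\det C_{K,L}}^2\bigr)^{1/2}$, the Hilbert--Schmidt norm of the $n$th compound of $C$. In terms of $\norm{C}_F$ this is only bounded by $\norm{C}_F^n/\sqrt{n!}$, which after $n$th roots exceeds $\norm{C}_F/\sqrt n$ by a factor tending to $\sqrt e$. You would therefore not reach the constant $4$.

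The missing idea is the compression in part 2 of \cref{lem:analytic-finite-volume}. Take thin QR factorizations $U^*=Q_UR_U$ and $V^*=Q_VR_V$ (here $V$ is real, so $V^*=V^\top$). Then $\abs{\det(UCV^*)}=\sqrt{\det(UU^*)\det(VV^*)}\,\abs{\det(Q_U^*CQ_V)}$, and Hadamard plus AM--GM on the $n\times n$ core give $\abs{\det(Q_U^*CQ_V)}\le(\norm{C}_F/\sqrt n)^n$. Equivalently, you must use the operator norm $\prod_{i\le n}\sigma_i(C)$ of the compound rather than its Hilbert--Schmidt norm. You also need, and only assert, the bivariate Parseval bound on the coefficients. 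It comes from the Laurent expansion of $f(J(\zeta),J(\omega))$ on the polyannulus and Parseval on the torus.
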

The proof is given in
\cref{sec:one-dimensional-analytic-estimates}.
Combining \cref{thm:analytic-determinant} with
\eqref{eq:best-residual-from-determinant} gives the corresponding
convergence rates.

\begin{corollary}[Analytic convergence rates]
\label{cor:functional-analytic-rates}

Let $e_k$ denote the residuals generated by the LU iteration
with approximate greedy pivoting applied to $f$, and suppose that the first $n$
pivots are nonzero.
\begin{enumerate}
\item
Under the assumptions of part~1 of
\cref{thm:analytic-determinant},
\[
    \min_{0\le k\le n-1}
    \norm{e_k}_{L^\infty([-1,1]^2)}
    \le
    \frac{2\gamma^{-1}M\sqrt n}
         {\sqrt{1-\rho^{-2}}}
    \rho^{-(n-1)/2}.
\]

\item
Under the assumptions of part~2 of
\cref{thm:analytic-determinant},
\[
    \min_{0\le k\le n-1}
    \norm{e_k}_{L^\infty([-1,1]^2)}
    \le
    \frac{4\gamma^{-1}M\sqrt n}
         {\sqrt{(1-\rho_x^{-2})(1-\rho_y^{-2})}}
    (\rho_x\rho_y)^{-(n-1)/2}.
\]
\end{enumerate}
\end{corollary}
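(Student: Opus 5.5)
This corollary follows directly by combining the determinant identity inequality \eqref{eq:best-residual-from-determinant} with the uniform determinant bounds of \cref{thm:analytic-determinant}.

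First, I will check that \eqref{eq:best-residual-from-determinant} applies. Since the first $n$ pivots are nonzero, the pivot-product identity gives
\[
\det A_n=\prod_{j=1}^n p_j,
\qquad
A_n=[f(x_i,y_j)]_{i,j=1}^n .
\]
Here $(x_i,y_i)$ are the pivot locations; this uses only \cref{lem:algorithmic-reduction} applied to pivot values of $f$. Approximate greedy pivoting \eqref{eq:weak-pivot} gives $\norm{e_{j-1}}_{L^\infty}\le\gamma^{-1}\abs{p_j}$ for $j=1,\ldots,n$. Bounding the minimum of the $\abs{p_j}$ by their geometric mean then yields
\[
\min_{0\le k\le n-1}\norm{e_k}_{L^\infty([-1,1]^2)}\le\gamma^{-1}\abs{\det A_n}^{1/n}.
\]

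Second, the pivot points $x_1,\ldots,x_n$ and $y_1,\ldots,y_n$ are particular points of $[-1,1]$. The bounds \eqref{eq:analytic-determinant} and \eqref{eq:mixed-analytic-determinant} hold for arbitrary such points, so they apply in particular to $\abs{\det A_n}^{1/n}$. Under the hypothesis of part~1, substituting \eqref{eq:analytic-determinant} gives the first claimed estimate. Under the hypothesis of part~2, substituting \eqref{eq:mixed-analytic-determinant} gives the second. In both cases the result is multiplied by $\gamma^{-1}$.

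There is no real obstacle, because all the analytic content sits in \cref{thm:analytic-determinant}. The only point needing care is that the pivot locations can repeat or be arbitrary. This causes no difficulty: the determinant theorem places no distinctness requirement on the points, and nonzero pivots already force the $x_i$ to be distinct and the $y_j$ to be distinct.
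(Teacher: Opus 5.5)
Your proposal is correct and follows essentially the same route as the paper. The paper obtains the corollary by feeding the uniform determinant bounds of \cref{thm:analytic-determinant}, evaluated at the pivot coordinates, into \eqref{eq:best-residual-from-determinant}, which is itself derived from the pivot-product identity together with approximate greedy pivoting; your remark that nonzero pivots force distinct $x_i$ and $y_j$ is true but not needed, since the determinant bounds hold for arbitrary points.
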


Previous convergence results for pivoted LU applied to functions
required one-sided analyticity in a Bernstein ellipse
with parameter $\rho>4$ and yielded a geometric convergence factor
$(\rho/4)^{-n}$, see
\cite[Corollary~13]{CortinovisKressnerMassei2020},\cite[Theorem~8.1]{TownsendTrefethen2015}.
In contrast, \cref{cor:functional-analytic-rates} applies for every
$\rho>1$ and gives the geometric factor $\rho^{-n/2}$ under one-sided
analyticity, and $\rho^{-n}$ under joint analyticity when
$\rho_x=\rho_y=\rho$.

\section{Conclusion}
\label{sec:conclusion}

We established convergence rates for pivoted QR and LU under approximate
greedy pivoting, in terms of singular-value decay for matrices and
regularity for LU applied to functions. The main tool is simple:
approximate greedy pivoting, together with the pivot-product identities,
controls the residual through the determinant of a selected submatrix.
The convergence estimates then follow by bounding this determinant using
singular values in the matrix case and standard polynomial or analytic
approximation in the function case.

In upcoming work, we will extend these smoothness-based estimates for
functions to higher dimensions and, more generally, to broader function
classes using Kolmogorov widths, as in the reduced-basis literature
\cite{BinevEtAl2011}.

A surprising feature of the analysis is that it accommodates approximate
greedy pivoting seamlessly, affecting the bounds only through the
pivot-quality constant. This apparent robustness of greedy pivoting may help
explain the success of recently proposed fast low-rank approximation
algorithms. Indeed, several recent methods replace the exact pivot search by
pivoting on a sketch of the matrix
\cite{DongMartinsson2023,PearceEtAl2025,iterativeCUR,MelgaardGu2015}
and have shown great promise despite limited theoretical guarantees.
However, these methods are not directly covered by the present analysis:
sketch-based methods generally control projected row or column information,
such as row or column norms, which does not directly imply the max-norm pivot
condition analyzed here for LU. Extending the determinant bounds to such
settings is a natural direction for future work.

\acknowledgement{
AI tools were used extensively in developing the results and writing this
paper. In particular, GPT-5.6 Sol autonomously produced a proof of a version of~\cref{cor:functional-analytic-rates} and an argument
close to the proof in~\cref{cor:functional-holder-rates}, using a prompt similar to the one described in~\cite{Kerger2026}. Notably,
its initial proof contained the main ingredient used throughout the paper: a bound on the residual using the determinant of a submatrix. This insight enabled the author
to extend the argument to the remaining results in this paper, including
the matrix case, approximate greedy pivoting, and pivoted QR. The presentation is by the
author, and all results were proved or verified by the author.

The prompt used to generate the proof of \cref{cor:functional-analytic-rates} and the result returned
are available at \url{https://github.com/ma-gilles/LU_rates_proof}.
}

\appendix

\section{Proof of the H\"older determinant bound}
\label{sec:proof-differentiable-determinant}

The proof is based on polynomial interpolation: each sampled row can
be written as a linear combination of nearby rows plus a small interpolation
residual. Subtracting the linear combination preserves the determinant, so
bounding the determinant reduces to bounding a product of local
interpolation errors.

To this end, for a tuple $\mathbf t=(t_0,\ldots,t_r)$ of distinct points
of $[-1,1]$, define the Lagrange basis polynomials and the associated
interpolation and residual operators by
\[
\begin{aligned}
    \ell_i^{\mathbf t}(s)
    &:=
    \prod_{\substack{0\le j\le r\\j\ne i}}
    \frac{s-t_j}{t_i-t_j},
    \qquad i=0,\ldots,r,\\
    (\mathcal I_{\mathbf t}g)(s)
    &:=
    \sum_{i=0}^r g(t_i)\ell_i^{\mathbf t}(s),
    \qquad
    \mathcal R_{\mathbf t}g
    :=
    g-\mathcal I_{\mathbf t}g.
\end{aligned}
\]
Thus, $\mathcal I_{\mathbf t}g$ is the Lagrange interpolant of $g$ at
the nodes in $\mathbf t$, and $\mathcal R_{\mathbf t}g$ is the
corresponding residual.

The following proposition is a H\"older variant of the standard Lagrange
interpolation error estimate; see, for example,
\cite[Theorem~6.2]{SuliMayers2003}. We then give the bivariate version used
below.

\begin{proposition}[H\"older interpolation estimate]
\label{prop:local-integer-interpolation}
Let $q\in\mathbb N$, let $0<\alpha\le1$, and let
$g\in C^q([-1,1])$ satisfy
$\seminorm{g^{(q)}}_{\alpha}<\infty$. Let
\[
    t_0<\cdots<t_{q+1}
\]
be points in $[-1,1]$. Set
$\mathbf t:=(t_0,\ldots,t_q)$ and $h:=t_{q+1}-t_0$.
Then
\begin{equation*}
    \abs{(\mathcal R_{\mathbf t}g)(t_{q+1})}
    \le
    \seminorm{g^{(q)}}_{\alpha} h^{q+\alpha}.
\end{equation*}
\end{proposition}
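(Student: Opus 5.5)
We prove the estimate with the divided-difference (Newton) form of the interpolation error rather than the usual pointwise remainder formula. That remainder formula needs $g^{(q+1)}$, which we do not have. Let $\mathbf t'=(t_0,\ldots,t_{q+1})$. The standard identity
\[
    (\mathcal R_{\mathbf t}g)(t_{q+1})
    =
    g[t_0,\ldots,t_{q+1}]\prod_{i=0}^{q}(t_{q+1}-t_i)
\]
holds for distinct nodes. It follows by comparing the Newton forms of $\mathcal I_{\mathbf t}g$ and $\mathcal I_{\mathbf t'}g$, since $\mathcal I_{\mathbf t'}g$ agrees with $g$ at $t_{q+1}$.

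The product is easy to bound. Each factor satisfies $0<t_{q+1}-t_i\le h$, so $\prod_{i=0}^q (t_{q+1}-t_i)\le h^{q+1}$. It therefore suffices to show
\[
    \abs{g[t_0,\ldots,t_{q+1}]}\le \seminorm{g^{(q)}}_\alpha\, h^{\alpha-1}.
\]

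For this we use the recursion
\[
    g[t_0,\ldots,t_{q+1}]
    =
    \frac{g[t_1,\ldots,t_{q+1}]-g[t_0,\ldots,t_q]}{t_{q+1}-t_0}.
\]
By the mean value theorem for divided differences, valid since $g\in C^q$, we have $g[t_1,\ldots,t_{q+1}]=g^{(q)}(\xi_1)/q!$ with $\xi_1\in[t_1,t_{q+1}]$, and $g[t_0,\ldots,t_q]=g^{(q)}(\xi_0)/q!$ with $\xi_0\in[t_0,t_q]$. Both points lie in $[t_0,t_{q+1}]$, so $\abs{\xi_1-\xi_0}\le h$. Hence the numerator is at most $\seminorm{g^{(q)}}_\alpha h^\alpha/q!$, and
\[
    \abs{g[t_0,\ldots,t_{q+1}]}\le \seminorm{g^{(q)}}_\alpha h^{\alpha-1}/q! .
\]
Combining this with the product bound gives $\abs{(\mathcal R_{\mathbf t}g)(t_{q+1})}\le \seminorm{g^{(q)}}_\alpha h^{q+\alpha}/q!$, which is at least as strong as the claim because $q!\ge1$. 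When $q=0$, the argument reduces directly to $\abs{g(t_1)-g(t_0)}\le\seminorm{g}_\alpha h^\alpha$.

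The main point needing care is the Hermite–Genocchi or mean-value representation of the $q$th divided difference for $g$ only in $C^q$. It follows from Rolle's theorem applied $q$ times to $g-\mathcal I g$ on $q+1$ distinct nodes, which needs only $g^{(q)}$ to exist. Alternatively, one can use the Peano-kernel form $g[t_0,\ldots,t_{q}]=\frac{1}{q!}\int g^{(q)}\,dB$, with $B$ a B-spline probability measure. Everything else is bookkeeping.
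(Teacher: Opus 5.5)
Your proof is correct, and it reaches the estimate by a different route from the paper.

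The paper never uses divided differences. It applies Rolle's theorem directly to the residual $r=\mathcal R_{\mathbf t}g$ to get zeros $\zeta_k\in[t_0,t_q]$ of each $r^{(k)}$, $0\le k\le q$. It then chains the mean value theorem $q$ times to obtain $\norm{r}_{L^\infty([t_0,t_{q+1}])}\le h^q\norm{r^{(q)}}_{L^\infty([t_0,t_{q+1}])}$. Finally, since $(\mathcal I_{\mathbf t}g)^{(q)}$ is constant, $r^{(q)}(x)=g^{(q)}(x)-g^{(q)}(\zeta_q)$, and the H\"older condition supplies the factor $h^\alpha$.

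You instead isolate the new Newton term via $(\mathcal R_{\mathbf t}g)(t_{q+1})=g[t_0,\ldots,t_{q+1}]\prod_{i=0}^q(t_{q+1}-t_i)$. You then use the divided-difference recursion together with the mean value theorem for $q$th divided differences, which is itself Rolle applied to $g-\mathcal I g$. The H\"older condition therefore enters through a comparison of $g^{(q)}$ at two mean-value points $\xi_0,\xi_1\in[t_0,t_{q+1}]$, rather than at $x$ and $\zeta_q$.

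Your route yields a sharper estimate, namely $\seminorm{g^{(q)}}_\alpha h^\alpha\prod_{i=1}^q(t_{q+1}-t_i)/q!\le\seminorm{g^{(q)}}_\alpha h^{q+\alpha}/q!$. The extra factor $1/q!$ would only modestly sharpen the constants in \cref{thm:integer-determinant}, because the first $q_x+1$ rows there are bounded by $M$ regardless.

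The paper's route is more self-contained: it needs only Rolle and the mean value theorem, with no divided-difference identities. It also controls $r$ uniformly on all of $[t_0,t_{q+1}]$, not just at $t_{q+1}$, although only the value at $t_{q+1}$ is used later.
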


\begin{proof}
Let $r =\mathcal R_{\mathbf t}g.
$
Since
$
    r(t_0)=\cdots=r(t_q)=0,
$
repeated application of Rolle's theorem shows that, for each
$k=0,\ldots,q$, the derivative $r^{(k)}$ has a zero
$\zeta_k\in[t_0,t_q]$. Hence, for $k=0,\ldots,q-1$, the
mean-value theorem gives
\[
\begin{aligned}
    \norm{r^{(k)}}_{L^\infty([t_0,t_{q+1}])}
    &=
    \sup_{x\in[t_0,t_{q+1}]}
    \abs{r^{(k)}(x)-r^{(k)}(\zeta_k)}\\
    &\le
    h\norm{r^{(k+1)}}_{L^\infty([t_0,t_{q+1}])}.
\end{aligned}
\]
Iterating,
\[
    \norm{r}_{L^\infty([t_0,t_{q+1}])}
    \le
    h^q
    \norm{r^{(q)}}_{L^\infty([t_0,t_{q+1}])}.
\]

Since $(\mathcal I_{\mathbf t}g)^{(q)}$ is constant, and
$r^{(q)}(\zeta_q)=0$, it follows that
\[
\begin{aligned}
    \abs{r^{(q)}(x)}
    &=
    \abs{
      g^{(q)}(x)-g^{(q)}(\zeta_q)
    }\\
    &\le
    \seminorm{g^{(q)}}_{\alpha}h^\alpha,
    \qquad x\in[t_0,t_{q+1}].
\end{aligned}
\]
Therefore,
\[
\begin{aligned}
    \abs{g(t_{q+1})-\mathcal I_{\mathbf t}g(t_{q+1})}
    &=
    \abs{r(t_{q+1})}\\
    &\le
    \seminorm{g^{(q)}}_{\alpha}h^{q+\alpha}.
\end{aligned}
\]
\end{proof}

\begin{lemma}[Bivariate H\"older interpolation estimate]
\label{lem:iterated-holder-interpolation}
Let $q_x,q_y\in\mathbb N$ and let
$0<\alpha_x,\alpha_y\le1$.
Suppose that $
    \partial_y^{q_y}\bigl(\partial_x^{q_x}f\bigr)
$
exists on $[-1,1]^2$ and satisfies
\[
    \seminorm{ \partial_y^{q_y}\bigl(\partial_x^{q_x}f\bigr)}_{\alpha_x,\alpha_y}^{(\mathrm{mix})}<\infty.
\]
Let
\[
    \xi_0<\cdots<\xi_{q_x+1},
    \qquad
    \eta_0<\cdots<\eta_{q_y+1}
\]
be points in $[-1,1]$.
Set $\boldsymbol\xi:=(\xi_0,\ldots,\xi_{q_x})$,
$\boldsymbol\eta:=(\eta_0,\ldots,\eta_{q_y})$,
$h_x:=\xi_{q_x+1}-\xi_0$, and
$h_y:=\eta_{q_y+1}-\eta_0$.
Define the residual operators in the two variables by
\[
\begin{aligned}
    (\mathcal R_x\varphi)(x,y)
    &:=
    \bigl(
      \mathcal R_{\boldsymbol\xi}(\varphi(\cdot,y))
    \bigr)(x),\\
    (\mathcal R_y\varphi)(x,y)
    &:=
    \bigl(
      \mathcal R_{\boldsymbol\eta}(\varphi(x,\cdot))
    \bigr)(y).
\end{aligned}
\]
Then
\begin{equation*}
    \abs{
      (\mathcal R_x\mathcal R_y f)
      (\xi_{q_x+1},\eta_{q_y+1})
    }
    \le
    \seminorm{
      \partial_y^{q_y}\bigl(\partial_x^{q_x}f\bigr)
    }_{\alpha_x,\alpha_y}^{(\mathrm{mix})}
    h_x^{q_x+\alpha_x}h_y^{q_y+\alpha_y}.
\end{equation*}
\end{lemma}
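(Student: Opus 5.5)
The plan is to apply \cref{prop:local-integer-interpolation} twice, once in each variable, after first checking that the two residual operators commute and that each commutes with differentiation in the other variable. Because $\mathcal R_x$ acts only in $x$ and $\mathcal R_y$ acts only in $y$, and both are linear combinations of point evaluations, we have $\mathcal R_x\mathcal R_y f = \mathcal R_y\mathcal R_x f$. Explicitly,
\[
    \mathcal R_x\mathcal R_y f
    =
    f-\mathcal I_x f-\mathcal I_y f+\mathcal I_x\mathcal I_y f .
\]

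Set $g(x):=(\mathcal R_y f)(x,\eta_{q_y+1})$. This is a finite linear combination of slices $f(\cdot,\eta_j)$ with coefficients that do not depend on $x$. We then have $(\mathcal R_x\mathcal R_y f)(\xi_{q_x+1},\eta_{q_y+1})=(\mathcal R_{\boldsymbol\xi}g)(\xi_{q_x+1})$. Applying \cref{prop:local-integer-interpolation} with $q=q_x$ and $\alpha=\alpha_x$ bounds this by $\seminorm{g^{(q_x)}}_{\alpha_x}h_x^{q_x+\alpha_x}$. Moreover,
\[
    g^{(q_x)}(x)=\bigl(\mathcal R_{\boldsymbol\eta}\,\partial_x^{q_x}f(x,\cdot)\bigr)(\eta_{q_y+1}),
\]
since differentiation in $x$ commutes with a linear combination of $y$-evaluations.

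It remains to bound the Hölder quotient of $g^{(q_x)}$. Fix $u\ne v$ and set
\[
    \psi(y):=\partial_x^{q_x}f(u,y)-\partial_x^{q_x}f(v,y).
\]
Then $g^{(q_x)}(u)-g^{(q_x)}(v)=(\mathcal R_{\boldsymbol\eta}\psi)(\eta_{q_y+1})$. By \cref{prop:local-integer-interpolation} in $y$ with $q=q_y$ and $\alpha=\alpha_y$, this is at most $\seminorm{\psi^{(q_y)}}_{\alpha_y}h_y^{q_y+\alpha_y}$. Here $\psi^{(q_y)}(y)=\partial_y^{q_y}\partial_x^{q_x}f(u,y)-\partial_y^{q_y}\partial_x^{q_x}f(v,y)$, and for $y\ne y'$ the difference $\psi^{(q_y)}(y)-\psi^{(q_y)}(y')$ is exactly the mixed second difference. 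By definition of the mixed seminorm it is bounded by $\seminorm{\partial_y^{q_y}(\partial_x^{q_x}f)}^{(\mathrm{mix})}_{\alpha_x,\alpha_y}\abs{u-v}^{\alpha_x}\abs{y-y'}^{\alpha_y}$. Dividing by $\abs{u-v}^{\alpha_x}$ and taking suprema gives
\[
    \seminorm{g^{(q_x)}}_{\alpha_x}\le \seminorm{\partial_y^{q_y}(\partial_x^{q_x}f)}^{(\mathrm{mix})}_{\alpha_x,\alpha_y}\,h_y^{q_y+\alpha_y}.
\]
Combining this with the bound from the previous paragraph yields the claim.

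The main obstacle is regularity bookkeeping. \cref{prop:local-integer-interpolation} assumes $g\in C^{q}$, and in the $y$-step it also needs $\psi\in C^{q_y}$. One must also justify that $\partial_y^{q_y}$ of $\partial_x^{q_x}f(u,\cdot)$ exists, which the hypothesis provides. Its Hölder continuity in $y$ follows from the mixed bound applied with $x,x'=u,v$, so no separate one-sided hypothesis is needed. However, the proof of the proposition uses only Rolle's theorem and the mean-value theorem, which require the $q$th derivative to exist, plus the Hölder seminorm. So I would note that the proposition's argument goes through under existence of the derivative with finite Hölder seminorm, and I would check that the lower-order derivatives $\partial_x^{k}\partial_y^{q_y}$ and $\partial_y^{k}\partial_x^{q_x}$ needed for Rolle's theorem exist under the stated hypothesis (implicitly, as the paper assumes).
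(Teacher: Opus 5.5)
Your proposal is correct and is essentially the paper's own proof: it likewise sets $G(x)=(\mathcal R_yf)(x,\eta_{q_y+1})$, bounds $\seminorm{G^{(q_x)}}_{\alpha_x}$ by applying \cref{prop:local-integer-interpolation} in $y$ to the slice difference $H_{u,v}(y)=\partial_x^{q_x}f(u,y)-\partial_x^{q_x}f(v,y)$ (your $\psi$) together with the mixed seminorm, and then applies the proposition once more in $x$. Your commutation identity for $\mathcal R_x\mathcal R_y$ is never used and can be dropped. The only lower-order derivatives you need are $\partial_x^k f$ and $\partial_y^k\partial_x^{q_x}f$, not $\partial_x^k\partial_y^{q_y}f$, and both are implicit in the hypothesis.
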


\begin{proof}
Set $K:=  \partial_y^{q_y}\bigl(\partial_x^{q_x}f\bigr)$.
For $u,v\in[-1,1]$, define
\[
    H_{u,v}(y)
    :=
    \partial_x^{q_x}f(u,y)
    -
    \partial_x^{q_x}f(v,y).
\]
Then
\[
    \partial_y^{q_y}H_{u,v}(y)
    =
    K(u,y)-K(v,y),
\]
and therefore
\begin{align*}
    \seminorm{\partial_y^{q_y}H_{u,v}}_{\alpha_y}
    &=
    \sup_{\substack{y,y'\in[-1,1]\\y\ne y'}}
    \frac{
      \abs{
        K(u,y)-K(v,y)-K(u,y')+K(v,y')
      }
    }{
      \abs{y-y'}^{\alpha_y}
    }\\
    &\le
    \seminorm{K}_{\alpha_x,\alpha_y}^{(\mathrm{mix})}
    \abs{u-v}^{\alpha_x}.
\end{align*}

Now set
$G(x):=(\mathcal R_yf)(x,\eta_{q_y+1})$.
Differentiating in the $x$ variable
gives
\begin{align*}
    G^{(q_x)}(u)-G^{(q_x)}(v)
    &=
    \bigl(
      \mathcal R_{\boldsymbol\eta}
      (
        \partial_x^{q_x}f(u,\cdot)
        -
        \partial_x^{q_x}f(v,\cdot)
      )
    \bigr)(\eta_{q_y+1})\\
    &=
    (\mathcal R_{\boldsymbol\eta}H_{u,v})(\eta_{q_y+1}).
\end{align*}
Applying \cref{prop:local-integer-interpolation} to $H_{u,v}$ in the
$y$ variable yields
\begin{align*}
    \abs{
      G^{(q_x)}(u)-G^{(q_x)}(v)
    }
    &\le
    \seminorm{\partial_y^{q_y}H_{u,v}}_{\alpha_y}
    h_y^{q_y+\alpha_y}\\
    &\le
    \seminorm{K}_{\alpha_x,\alpha_y}^{(\mathrm{mix})}
    \abs{u-v}^{\alpha_x}h_y^{q_y+\alpha_y}.
\end{align*}
Thus
$
    \seminorm{G^{(q_x)}}_{\alpha_x}
    \le
    \seminorm{K}_{\alpha_x,\alpha_y}^{(\mathrm{mix})}
    h_y^{q_y+\alpha_y}
$.

Finally, by the definitions of $G$ and $\mathcal R_x$,
\[
    (\mathcal R_{\boldsymbol\xi}G)(\xi_{q_x+1})
    =
    (\mathcal R_x\mathcal R_yf)
    (\xi_{q_x+1},\eta_{q_y+1}).
\]
A second application of
\cref{prop:local-integer-interpolation} in the $x$ variable gives
\begin{align*}
    \abs{
      (\mathcal R_x\mathcal R_yf)
      (\xi_{q_x+1},\eta_{q_y+1})
    }
    &\le
    \seminorm{G^{(q_x)}}_{\alpha_x}
    h_x^{q_x+\alpha_x}\\
    &\le
    \seminorm{K}_{\alpha_x,\alpha_y}^{(\mathrm{mix})}
    h_x^{q_x+\alpha_x}h_y^{q_y+\alpha_y}.
\end{align*}
\end{proof}

\begin{lemma}[Product bound]
\label{lem:consecutive-span-product}
Let $z_1<\cdots<z_n$ be points of $[-1, 1]$, let $q\ge1$
be an integer, and let $\mu>0$. Define
\[
    a_i:=
    \begin{cases}
      1,&i\le q,\\
      (z_i-z_{i-q})^\mu,&i>q.
    \end{cases}
\]
Then
\begin{equation*}
    \left(\prod_{i=1}^n a_i\right)^{1/n}
    \le
    (3q)^\mu n^{-\mu}.
\end{equation*}
\end{lemma}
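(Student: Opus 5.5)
Since every factor is positive, we bound $\log\prod_i a_i$. Write $d_i := z_i - z_{i-q}$ for $i>q$, so that $\prod_{i=1}^n a_i=\prod_{i=q+1}^n d_i^{\mu}$. We may assume $n>q$, since otherwise the product is $1$ and $(3q/n)^\mu\ge 3^\mu>1$. The plan is to group the indices $i>q$ by residue class modulo $q$. For a fixed residue $c$, the indices $i\equiv c \pmod q$ with $i>q$ form a chain $i=c+q,c+2q,\ldots$. Along this chain the corresponding intervals $[z_{i-q},z_i]$ are consecutive and non-overlapping, so they telescope. Hence the sum of the $d_i$ over the chain equals $z_{\text{last}}-z_{c}\le 2$, and summing over all $q$ residue classes gives
\[
    \sum_{i=q+1}^n d_i\le 2q.
\]

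Next I apply AM--GM to the $m:=n-q$ numbers $d_i$:
\[
    \prod_{i=q+1}^n d_i\le\left(\frac{2q}{m}\right)^{m}.
\]
Therefore
\[
    \Bigl(\prod_i a_i\Bigr)^{1/n}\le\left(\frac{2q}{n-q}\right)^{\mu(n-q)/n},
\]
and it remains to show that
\[
    \left(\frac{2q}{n-q}\right)^{(n-q)/n}\le\frac{3q}{n}.
\]

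I expect this elementary inequality to be the main obstacle. Set $t:=(n-q)/n\in(0,1)$, so that $n-q=tn$ and $q=(1-t)n$. The left side becomes $\bigl(2(1-t)/t\bigr)^{t}$ and the right side becomes $3(1-t)$. I split into two cases.

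\emph{Case $2(1-t)/t\le 1$.} Since $t\le1$, the left side is at most $2(1-t)/t$ raised to a power no larger than needed; more precisely I will check directly that the inequality holds, perhaps using $x^t\le 1$. If instead $3(1-t)<1$, i.e. $q<n/3$, this needs care.

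\emph{Case $2(1-t)/t> 1$.} Here $x^t\le x$ fails to help in the right direction. I will instead take logarithms and verify
\[
    t\log\bigl(2(1-t)\bigr)-t\log t\le\log 3+\log(1-t).
\]
Equivalently, with $s=1-t$,
\[
    (1-s)\log(2s)-(1-s)\log(1-s)\le\log(3s).
\]
Rearranging, this reads
\[
    s\log(1/(2s))+(1-s)\log\frac{1}{1-s}\le\log(3/2),
\]
that is,
\[
    H(s)-s\log 2\le\log(3/2),
\]
where $H$ is the natural entropy. Now $\max_s\bigl(H(s)-s\log2\bigr)=\log(1+1/2)=\log(3/2)$, by the standard identity $\max_s\bigl(H(s)+s\log a\bigr)=\log(1+a)$. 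This gives the inequality exactly, with the constant $3=2+1$. This entropy identity is the key calculation, and it covers all $t$ at once, so the case split is unnecessary. Raising to the power $\mu$ completes the proof.
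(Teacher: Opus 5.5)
Your proof is correct, and its first half matches the paper. Your residue-class telescoping, which gives $\sum_{i>q} d_i\le 2q$, is equivalent to the paper's observation that each adjacent gap $z_{j+1}-z_j$ lies in at most $q$ of the spans.

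The two routes differ at the AM--GM step. The paper applies AM--GM to all $n$ numbers $1,\ldots,1,d_{q+1},\ldots,d_n$, with $q$ ones. This gives $\bigl(\prod_{i>q}d_i\bigr)^{1/n}\le (q+2q)/n=3q/n$ in one line, so the inequality you call the main obstacle never appears. Your entropy bound $H(s)-s\log 2\le\log(3/2)$ is that padded AM--GM in disguise. It is the weighted AM--GM inequality $x^t\cdot 1^{1-t}\le tx+(1-t)$ with $x=2(1-t)/t$, whose right-hand side is exactly $2(1-t)+(1-t)=3(1-t)$.

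Your unpadded route does buy a slightly sharper intermediate bound, $\bigl(2q/(n-q)\bigr)^{\mu(n-q)/n}$. It never exceeds $(3q/n)^\mu$, with equality only when $q/n=1/3$, and it behaves like $(2q/n)^\mu$ as $n\to\infty$ with $q$ fixed. The paper's padding buys brevity.

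If you write this up, delete the abandoned case split, which is incomplete as stated. Then either justify $\max_{s}\bigl(H(s)+s\log a\bigr)=\log(1+a)$ (by concavity, with critical point $s=a/(1+a)$) or replace it by the one-line weighted AM--GM.
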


\begin{proof}
If $n\le q$, then every $a_i=1$, while
$
    (3q)^\mu n^{-\mu}
    =
    \left(\frac{3q}{n}\right)^\mu
    \ge1.
$
Suppose $n>q$, and put
\[
    h_i:=z_i-z_{i-q},
    \qquad i=q+1,\ldots,n.
\]
Writing $\delta_j:=z_{j+1}-z_j$, every adjacent gap occurs in at most $q$
of the spans $h_i$; therefore,
\[
    \sum_{i=q+1}^n h_i
    \le
    q\sum_{j=1}^{n-1}\delta_j
    =
    q(z_n-z_1)
    \le
    2q.
\]
Applying the arithmetic--geometric mean inequality to the $n$ numbers
\[
    \underbrace{1,\ldots,1}_{q\text{ times}},
    h_{q+1},\ldots,h_n,
\]
we obtain
\[
    \left(\prod_{i=q+1}^n h_i\right)^{1/n}
    \le
    \frac{q+\sum_{i=q+1}^n h_i}{n}
    \le
    \frac{3q}{n}.
\]
Since $
    \prod_{i=1}^n a_i
    =
    \left(\prod_{i=q+1}^n h_i\right)^\mu,$
it follows that $
    \left(\prod_{i=1}^n a_i\right)^{1/n}
    \le
    \left(\frac{3q}{n}\right)^\mu
$.
\end{proof}

\begin{restatedtheorem}[Statement of \Cref{thm:integer-determinant}]
Let $n\ge1$.

\begin{enumerate}
\item
Let $q_x\in\mathbb N$ and $0<\alpha_x\le1$.
Suppose that $\partial_x^{q_x}f$ exists on $[-1,1]^2$ and that
\[
    M
    :=
    \max\left\{
      \norm{f}_{L^\infty([-1,1]^2)},
      \seminorm{\partial_x^{q_x}f}_{\alpha_x}^{(x)}
    \right\}
    <\infty.
\]
Then, for arbitrary
$x_1,\ldots,x_n,y_1,\ldots,y_n\in [-1, 1]$,
\[
    \abs{\det[f(x_i,y_j)]_{i,j=1}^n}^{1/n}
    \le
    \bigl(3(q_x+1)\bigr)^{q_x+\alpha_x}
    M n^{1/2-(q_x+\alpha_x)}.
\]

\item
Let $q_x,q_y\in\mathbb N$ and
$0<\alpha_x,\alpha_y\le1$. Suppose that
$\partial_x^{q_x}f$, $\partial_y^{q_y}f$, and
$\partial_y^{q_y}(\partial_x^{q_x}f)$ exist on $[-1,1]^2$ and that
\[
\begin{aligned}
    M
    :=
    \max\biggl\{&
      \norm{f}_{L^\infty([-1,1]^2)},
      \seminorm{\partial_x^{q_x}f}_{\alpha_x}^{(x)},
      \seminorm{\partial_y^{q_y}f}_{\alpha_y}^{(y)},\\
      &\seminorm{
        \partial_y^{q_y}\bigl(\partial_x^{q_x}f\bigr)
      }_{\alpha_x,\alpha_y}^{(\mathrm{mix})}
    \biggr\}
    <\infty.
\end{aligned}
\]
Then, for arbitrary
$x_1,\ldots,x_n,y_1,\ldots,y_n\in [-1, 1]$,
\[
\begin{aligned}
    \abs{\det[f(x_i,y_j)]_{i,j=1}^n}^{1/n}
    \le{}&
    \bigl(3(q_x+1)\bigr)^{q_x+\alpha_x}
    \bigl(3(q_y+1)\bigr)^{q_y+\alpha_y}
    M\\
    &\times
    n^{1/2-(q_x+\alpha_x)-(q_y+\alpha_y)}.
\end{aligned}
\]
\end{enumerate}
\end{restatedtheorem}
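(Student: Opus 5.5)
Since permuting rows and columns of $[f(x_i,y_j)]_{i,j=1}^n$ changes the determinant only by a sign, I will assume $x_1\le\cdots\le x_n$ and $y_1\le\cdots\le y_n$. If two $x_i$ (or two $y_j$) coincide, two rows (or columns) are equal and the determinant vanishes, so I may also assume strict ordering. I then perform row operations that leave the determinant unchanged. Set $q:=q_x+1$. For each $i>q$ I replace row $i$ by row $i$ minus the combination of rows $i-q,\ldots,i-1$ given by the Lagrange weights $\ell_k^{\mathbf t}(x_i)$ with $\mathbf t=(x_{i-q},\ldots,x_{i-1})$. I do this from the bottom up, $i=n,n-1,\ldots,q+1$, so that each step uses only the original rows above it. The new row $i$ has entries $(\mathcal R_{\mathbf t}f(\cdot,y_j))(x_i)$. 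By \cref{prop:local-integer-interpolation}, applied with the $q_x+2$ points $x_{i-q}<\cdots<x_i$, each entry is bounded by $M(x_i-x_{i-q})^{q_x+\alpha_x}$. Rows $i\le q$ are untouched, and their entries are bounded by $M$.

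Hadamard's inequality bounds the determinant by the product of row Euclidean norms, each at most $\sqrt n$ times the row's max entry. This gives
\[
\abs{\det}\le (\sqrt n\,M)^n\prod_{i=1}^n a_i,
\]
with $a_i$ as in \cref{lem:consecutive-span-product} for $z_i=x_i$, $\mu=q_x+\alpha_x$, and $q=q_x+1$. Taking $n$th roots and applying that lemma yields part~1 directly: $\sqrt n\,M\,(3(q_x+1))^{q_x+\alpha_x}n^{-(q_x+\alpha_x)}$.

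For part~2 I will also do the analogous column operations with $q'=q_y+1$ and the $y$-nodes, again bottom-up, i.e.\ from the last column backward. Entry $(i,j)$ then falls into one of four cases. If $i\le q$ and $j\le q'$, it is $f$, bounded by $M$. If $i>q$ and $j\le q'$, it is $\mathcal R_x f$, bounded by $Mh_{x,i}^{q_x+\alpha_x}$ as in part~1. If $i\le q$ and $j>q'$, it is $\mathcal R_y f$, bounded by $Mh_{y,j}^{q_y+\alpha_y}$ by the same proposition in $y$. If $i>q$ and $j>q'$, it is $\mathcal R_x\mathcal R_y f$ evaluated at $(x_i,y_j)$. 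Since the interpolation operators in $x$ and $y$ commute (both are linear combinations of point values), \cref{lem:iterated-holder-interpolation} bounds this entry by $M h_{x,i}^{q_x+\alpha_x}h_{y,j}^{q_y+\alpha_y}$. In every case the entry is at most $M a_i b_j$, where $a_i$ and $b_j$ are the row and column weights of \cref{lem:consecutive-span-product}.

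Factoring out $D_a=\mathrm{diag}(a_i)$ and $D_b=\mathrm{diag}(b_j)$ leaves a matrix with entries bounded by $M$. Hadamard bounds that matrix's determinant by $(\sqrt n M)^n$. The total is therefore $\abs{\det}\le (\sqrt nM)^n\prod_i a_i\prod_j b_j$, and applying \cref{lem:consecutive-span-product} twice gives the stated bound. One degenerate case needs care: if some $a_i$ or $b_j$ is $0$, the corresponding row or column is zero and the determinant vanishes, so the factorization is harmless.

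The main obstacle I expect is bookkeeping. I must check that the sequential row and column eliminations really produce entries equal to the residuals at the correct nodes, namely the $q$ immediately preceding original points, even though earlier rows have themselves been modified. Performing the operations in reverse order, or equivalently noting that the transformation is unit lower triangular with the original rows as the combination basis, resolves this.
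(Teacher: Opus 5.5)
Your proposal is correct and follows essentially the same route as the paper. The paper encodes your bottom-up row and column eliminations as unit lower triangular matrices $L_x$ and $L_y$, so that $\det(L_xAL_y^\top)=\det A$; it bounds the transformed entries by $Ma_ib_j$ via the one- and two-variable interpolation estimates, and concludes with Hadamard's inequality and the product bound lemma (your remark about a vanishing $a_i$ or $b_j$ is vacuous once the nodes are strictly ordered, but harmless).
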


\begin{proof}[Proof of Theorem~\ref{thm:integer-determinant}]
Let $A:=[f(x_i,y_j)]_{i,j=1}^n$.
If two of the $x_i$ or two of the $y_j$ coincide, then two rows or
columns of $A$ coincide, so $\det A=0$. We may therefore permute the
rows and assume that $
    x_1<\cdots<x_n.
$
For the mixed estimate, also assume that $
    y_1<\cdots<y_n.
$
These permutations do not change the absolute value of the determinant.

Set $r_x:=q_x+1$. For $i>r_x$, let $\mathcal I_i^xg$ be the
Lagrange polynomial interpolating $g$ at
$x_{i-r_x},\ldots,x_{i-1}$, and set
$\mathcal R_i^xg:=g-\mathcal I_i^xg$. Then
\[
\begin{aligned}
    (\mathcal R_i^xg)(x_i)
    &=
    g(x_i)
    -
    \sum_{\ell=i-r_x}^{i-1}
    \lambda_{i\ell}^{(x)}g(x_\ell), \qquad
    \lambda_{i\ell}^{(x)}
    &:=
    \prod_{\substack{k=i-r_x\\k\ne\ell}}^{i-1}
    \frac{x_i-x_k}{x_\ell-x_k}.
\end{aligned}
\]

Define the associated linear map $\mathcal T_x:\R^n\to\R^n$ by
\[
    (\mathcal T_xv)_i
    :=
    \begin{cases}
      v_i,
      &i\le r_x,\\[1mm]
      v_i-
      \displaystyle\sum_{\ell=i-r_x}^{i-1}
      \lambda_{i\ell}^{(x)}v_\ell,
      &i>r_x.
    \end{cases}
\]
Thus, when $v_i=g(x_i)$, the map $\mathcal T_x$ leaves the first $r_x$
samples unchanged and replaces each subsequent sample by its local
interpolation residual $(\mathcal R_i^xg)(x_i)$.
Let $L_x$ be the matrix representing $\mathcal T_x$. Since
$(\mathcal T_xv)_i$ depends only on $v_1,\ldots,v_i$, and the coefficient
of $v_i$ is one, $L_x$ is unit lower triangular. Therefore,
$\det L_x=1$.

Set $A^{(x)}:=L_xA$. Its entries are
\[
    A^{(x)}_{ij}
    =
    \begin{cases}
      f(x_i,y_j),
      &i\le r_x,\\[1mm]
      (\mathcal R_i^x f)(x_i,y_j),
      &i>r_x.
    \end{cases}
\]
Define
\[
    a_i:=
    \begin{cases}
      1,&i\le r_x,\\
      (x_i-x_{i-r_x})^{q_x+\alpha_x},&i>r_x.
    \end{cases}
\]
The supremum-norm bound for $i\le r_x$ and
\cref{prop:local-integer-interpolation} for $i>r_x$ give
$\abs{A^{(x)}_{ij}}\le Ma_i$. Thus every row of $A^{(x)}$ has
Euclidean norm at most $M\sqrt n\,a_i$. Hadamard's inequality and
\cref{lem:consecutive-span-product}, applied with
$q=r_x=q_x+1$ and $\mu=q_x+\alpha_x$, give
\[
\begin{aligned}
    \abs{\det A}^{1/n}
    &=
    \abs{\det A^{(x)}}^{1/n}\\
    &\le
    M\sqrt n
    \left(\prod_{i=1}^n a_i\right)^{1/n}\\
    &\le
    \bigl(3(q_x+1)\bigr)^{q_x+\alpha_x}
    M n^{1/2-(q_x+\alpha_x)}.
\end{aligned}
\]
This proves \eqref{eq:one-sided-holder-determinant}.
No ordering or regularity in the second variable was used, so the same
argument applies with the second copy of $[-1, 1]$ replaced by any set
$\mathcal Y$.

We now prove the mixed estimate by making the same construction in the
$y$ variable. Set $r_y:=q_y+1$. For $j>r_y$, let
$\mathcal I_j^yg$ be the Lagrange polynomial interpolating $g$ at
$y_{j-r_y},\ldots,y_{j-1}$, and set $
    \mathcal R_j^yg:=g-\mathcal I_j^yg.
$
Let $L_y$ be the unit lower triangular matrix obtained from the same
construction as $L_x$, and define
\[
    b_j:=
    \begin{cases}
      1,&j\le r_y,\\
      (y_j-y_{j-r_y})^{q_y+\alpha_y},&j>r_y.
    \end{cases}
\]
Right multiplication by $L_y^\top$ applies this transformation to each
row of $A^{(x)}$. Set
$\widetilde A:=A^{(x)}L_y^\top=L_xAL_y^\top$, so
$\det\widetilde A=\det A$.

The transformed matrix has the block form
\[
    \widetilde A
    =
    \left[
    \begin{array}{c|c}
      [f(x_i,y_j)]_
      {\substack{1\le i\le r_x\\1\le j\le r_y}}
      &
      [(\mathcal R_j^y f)(x_i,y_j)]_
      {\substack{1\le i\le r_x\\r_y<j\le n}}
      \\ \hline
      [(\mathcal R_i^x f)(x_i,y_j)]_
      {\substack{r_x<i\le n\\1\le j\le r_y}}
      &
      [(\mathcal R_i^x\mathcal R_j^y f)(x_i,y_j)]_
      {\substack{r_x<i\le n\\r_y<j\le n}}
    \end{array}
    \right].
\]
The first case is bounded by the supremum norm. The two off-diagonal
cases are bounded by the corresponding one-variable interpolation
estimates, and the final case is bounded by
\cref{lem:iterated-holder-interpolation}. Consequently,
\begin{equation}
    \abs{\widetilde A_{ij}}
    \le
    Ma_ib_j,
    \qquad
    1\le i,j\le n.
    \label{eq:holder-entry-bound}
\end{equation}

Let $D_a:=\operatorname{diag}(a_1,\ldots,a_n)$,
$D_b:=\operatorname{diag}(b_1,\ldots,b_n)$, and
$B:=D_a^{-1}\widetilde A D_b^{-1}$. By
\eqref{eq:holder-entry-bound}, $\abs{B_{ij}}\le M$, so every row of $B$
has Euclidean norm at most $M\sqrt n$. Hadamard's inequality therefore
gives
\[
    \abs{\det B}\le(M\sqrt n)^n.
\]
Since $\widetilde A=D_aBD_b$ and $\det\widetilde A=\det A$,
\begin{equation}
    \abs{\det A}
    \le
    (M\sqrt n)^n
    \left(\prod_{i=1}^n a_i\right)
    \left(\prod_{j=1}^n b_j\right).
    \label{eq:holder-hadamard}
\end{equation}

Applying \cref{lem:consecutive-span-product} in the two variables gives
\[
\begin{aligned}
    \left(\prod_{i=1}^n a_i\right)^{1/n}
    &\le
    \bigl(3(q_x+1)\bigr)^{q_x+\alpha_x}
    n^{-(q_x+\alpha_x)},\\
    \left(\prod_{j=1}^n b_j\right)^{1/n}
    &\le
    \bigl(3(q_y+1)\bigr)^{q_y+\alpha_y}
    n^{-(q_y+\alpha_y)}.
\end{aligned}
\]
Taking $n$th roots in \eqref{eq:holder-hadamard} proves
\eqref{eq:integer-determinant}.
\end{proof}

\section{Proof of the analytic determinant bound}
\label{sec:one-dimensional-analytic-estimates}

The proof in the analytic case is similar to the H\"older proof. Polynomial
interpolation errors are replaced by geometrically decaying Chebyshev
coefficients, and the resulting factorization gives geometric decay of the
determinant.

We make use of the Chebyshev polynomials $T_m$ and their Joukowski
representation:
\[
    T_m(J(w))
    =
    \frac12\left(w^m+w^{-m}\right),
    \qquad
    m\in\mathbb N,
    \qquad
    J(w):=\frac12(w+w^{-1}).
\]
We also recall that the open region enclosed by the Bernstein ellipse may
be expressed as
\[
    \cE_\rho=J(A_\rho),
    \qquad
    A_\rho:=\{w\in\C:\rho^{-1}<\abs{w}<\rho\}.
\]

Let $\ell^2(\mathbb N)$ denote the space of square-summable
sequences, and set $\eta_0:=1$ and $\eta_m:=2$ for $m\ge1$.
We use weighted $\ell^2$ and bivariate variants of the classical bounds
on Chebyshev coefficients
\cite[Chapter~8, Theorem~8.1]{Trefethen2019}, which we prove below.

\begin{proposition}[Bounds on Chebyshev coefficients]
\label{prop:weighted-chebyshev}
The following statements hold.
\begin{enumerate}
\item Let $g:[-1,1]\to\C$ extend to a holomorphic function
$\widetilde g:\cE_\rho\to\C$, where $\rho>1$, and suppose
\begin{equation}
    \sup_{z\in\cE_\rho}\abs{\widetilde g(z)}\le M.
    \label{eq:weighted-chebyshev-extension-bound}
\end{equation}
Then there is $b=(b_m)_{m\ge0}\in\ell^2(\mathbb N)$ such that
\begin{equation}
    \sum_{m=0}^\infty\abs{b_m}^2\le M^2
    \label{eq:weighted-chebyshev-norm}
\end{equation}
and
\begin{equation}
    g(x)
    =
    \sum_{m=0}^\infty
    b_m\eta_m\rho^{-m}T_m(x),
    \qquad x\in [-1, 1].
    \label{eq:weighted-chebyshev-expansion}
\end{equation}
\item Let $g:[-1,1]^2\to\C$ extend to a holomorphic function
\[
    \widetilde g:
    \cE_{\rho_x}\times\cE_{\rho_y}\to\C,
\]
where $\rho_x,\rho_y>1$, and suppose
\begin{equation}
    \sup_{(z,w)\in\cE_{\rho_x}\times\cE_{\rho_y}}
    \abs{\widetilde g(z,w)}
    \le M.
    \label{eq:weighted-bivariate-chebyshev-extension-bound}
\end{equation}
Then there exist coefficients $c_{pq}\in\C$, $p,q\ge0$, such that
\begin{equation}
    \sum_{p=0}^{\infty} \sum_{q=0}^{\infty} \abs{c_{pq}}^2
    \le
    M^2
    \label{eq:weighted-bivariate-chebyshev-norm}
\end{equation}
and
\begin{equation}
    g(x,y)
    =
    \sum_{p,q\ge0}
    c_{pq}\eta_p\eta_q
    \rho_x^{-p}\rho_y^{-q}T_p(x)T_q(y),
    \qquad (x,y)\in [-1, 1]^2.
    \label{eq:weighted-bivariate-chebyshev-expansion}
\end{equation}
\end{enumerate}
Both series converge absolutely and uniformly on their respective
domains.
\end{proposition}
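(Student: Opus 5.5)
We prove part~1 by pulling back to the annulus through the Joukowski map. Set $h(w):=\widetilde g(J(w))$ for $w\in A_\rho$. Then $h$ is holomorphic on $A_\rho$, bounded by $M$, and symmetric, $h(w)=h(w^{-1})$. It therefore has a Laurent expansion $h(w)=\sum_{k\in\mathbb Z} a_k w^k$ with $a_{-k}=a_k$. For $x=J(e^{i\theta})\in[-1,1]$, grouping $k$ with $-k$ gives
\[
g(x)=a_0+\sum_{m\ge1}2a_m T_m(x).
\]
So we set $b_m:=a_m\rho^m$, and \eqref{eq:weighted-chebyshev-expansion} holds with the weights $\eta_m$.

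The key point is the norm bound \eqref{eq:weighted-chebyshev-norm}. For $1<r<\rho$, Parseval on the circle $\abs{w}=r$ gives
\[
\sum_{k\in\mathbb Z}\abs{a_k}^2r^{2k}=\frac1{2\pi}\int_0^{2\pi}\abs{h(re^{i\theta})}^2\,d\theta\le M^2.
\]
Keeping only the terms $k\ge0$, we get $\sum_{m\ge0}\abs{a_m}^2r^{2m}\le M^2$. Letting $r\uparrow\rho$ and using monotone convergence yields $\sum_m\abs{b_m}^2\le M^2$. For absolute and uniform convergence, note that $\abs{b_m}\le M$ and $\abs{T_m}\le1$ on $[-1,1]$, so the series is dominated by $\sum_m 2M\rho^{-m}<\infty$.

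Part~2 goes the same way with $H(w,v):=\widetilde g(J(w),J(v))$ on $A_{\rho_x}\times A_{\rho_y}$. This function has a double Laurent expansion $\sum_{k,l}a_{kl}w^kv^l$ that is symmetric under $k\mapsto-k$ and under $l\mapsto-l$ separately. Folding in each index gives the $T_pT_q$ expansion with $c_{pq}:=a_{pq}\rho_x^p\rho_y^q$. Two-dimensional Parseval on the torus $\abs{w}=r_x$, $\abs{v}=r_y$ gives $\sum_{k,l}\abs{a_{kl}}^2r_x^{2k}r_y^{2l}\le M^2$. We restrict to $k,l\ge0$ and let $r_x\uparrow\rho_x$, $r_y\uparrow\rho_y$.

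The main technical step is justifying the bivariate Laurent expansion and its convergence. We would take $a_{kl}$ to be the iterated Cauchy coefficient integrals over the torus. These are independent of the radii within the product annulus, by Cauchy's theorem in each variable. Absolute convergence on compact subtori follows from the usual one-variable estimates applied twice. After that, the folding and the Parseval argument are routine, and uniform convergence follows from the bound $\abs{c_{pq}}\le M$ as in part~1.
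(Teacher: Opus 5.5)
Your proposal is correct and follows the paper's proof essentially step for step: the Joukowski pullback to the (poly)annulus, symmetry of the Laurent coefficients, folding on the unit circle or torus, Parseval on $\abs{w}=r<\rho$ with the negative indices dropped, and monotone convergence as $r\uparrow\rho$. The only minor differences are that you obtain the $M$-test directly from $\abs{b_m}\le M$ (respectively $\abs{c_{pq}}\le M$) instead of via Cauchy--Schwarz, and that you sketch the bivariate Laurent expansion through iterated Cauchy integrals where the paper simply cites a Laurent theorem for polyannuli.
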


\begin{proof}
For the first statement, define
$
    G(w):=\widetilde g(J(w)).
$
Then $G$ is holomorphic on $A_\rho$ and satisfies
$\abs{G(w)}\le M$ there. Therefore, it has a unique Laurent expansion
\[
    G(w)=\sum_{m\in\mathbb Z}a_mw^m,
    \qquad
    w\in A_\rho.
\]
Since
$J(w)=J(w^{-1})$, uniqueness of the Laurent coefficients gives
$a_{-m}=a_m$ for all $m\in\mathbb Z$.

Let $1<r<\rho$.  Parseval's identity on $\abs{w}=r$ gives
\[
\begin{aligned}
    \frac1{2\pi}\int_0^{2\pi}
    \abs{G(re^{it})}^2\,dt
    &=
    \sum_{m\in\mathbb Z}\abs{a_m}^2r^{2m}\\
    &=
    \abs{a_0}^2+
    \sum_{m=1}^\infty
    \bigl(r^{2m}+r^{-2m}\bigr)\abs{a_m}^2
    \le M^2,
\end{aligned}
\]
where the inequality follows from
\eqref{eq:weighted-chebyshev-extension-bound}.  Dropping the nonnegative terms
$r^{-2m}\abs{a_m}^2$ gives
\[
    \abs{a_0}^2+
    \sum_{m=1}^\infty r^{2m}\abs{a_m}^2
    \le M^2.
\]
Since this estimate holds for every $1<r<\rho$, monotone convergence
as $r\uparrow\rho$ gives
\[
    \abs{a_0}^2+
    \sum_{m=1}^\infty \rho^{2m}\abs{a_m}^2
    \le M^2.
\]
Define $
    b_m:=\rho^ma_m$.
This proves \eqref{eq:weighted-chebyshev-norm}.

Since the Laurent series converges absolutely on the unit circle, it may
be grouped in the pairs $\{m,-m\}$, and hence
\[
\begin{aligned}
    G(w)
    =
    a_0+\sum_{m=1}^\infty a_m(w^m+w^{-m}) 
    =
    \sum_{m=0}^\infty
    b_m\eta_m\rho^{-m}T_m(J(w)).
\end{aligned}
\]
Since $J$ maps the unit circle onto $[-1, 1]$, this proves
\eqref{eq:weighted-chebyshev-expansion}.

For the second statement, define
$
    F(\zeta,\omega)
    :=
    \widetilde g(J(\zeta),J(\omega)).
$
Then $F$ is holomorphic on
$A_{\rho_x}\times A_{\rho_y}$ and satisfies
$\abs{F(\zeta,\omega)}\le M$ there.
The Laurent-expansion theorem for polyannuli
\cite[Theorem~1.118]{Scheidemann2023} gives a unique expansion
\[
    F(\zeta,\omega)
    =
    \sum_{j,k\in\mathbb Z}
    a_{jk}\zeta^j\omega^k.
\]
The identities
\[
    F(\zeta,\omega)
    =
    F(\zeta^{-1},\omega)
    =
    F(\zeta,\omega^{-1})
\]
and uniqueness of the Laurent coefficients imply
\[
    a_{-j,k}=a_{j,k},
    \qquad
    a_{j,-k}=a_{j,k},
    \qquad j,k\in\mathbb Z.
\]

Fix $1<r_x<\rho_x$ and $1<r_y<\rho_y$. Parseval's identity for the
Laurent expansion on the torus gives
\begin{equation*}
\begin{aligned}
    \frac1{(2\pi)^2}
    \int_0^{2\pi}\int_0^{2\pi}
    \abs{F(r_xe^{it},r_ye^{iu})}^2\,dt\,du
    =
    \sum_{j,k\in\mathbb Z}
    \abs{a_{jk}}^2r_x^{2j}r_y^{2k}
    \le M^2,
\end{aligned}
\end{equation*}
where the inequality follows from
\eqref{eq:weighted-bivariate-chebyshev-extension-bound}.
Keeping only the terms with nonnegative indices gives
\[
    \sum_{p,q\ge0}
    \abs{a_{pq}}^2r_x^{2p}r_y^{2q}
    \le M^2.
\]
Letting $r_x\uparrow\rho_x$ and $r_y\uparrow\rho_y$ yields
\[
    \sum_{p,q\ge0}
    \abs{a_{pq}}^2\rho_x^{2p}\rho_y^{2q}
    \le M^2.
\]
Define
$
    c_{pq}:=\rho_x^p\rho_y^qa_{pq}.
$
This proves \eqref{eq:weighted-bivariate-chebyshev-norm}.

By absolute convergence on the unit torus, the terms may be grouped
according to the signs of the two indices. Thus, for
$\abs{\zeta}=\abs{\omega}=1$,
\[
\begin{aligned}
    F(\zeta,\omega)
    &=
    \sum_{p,q\ge0}
    a_{pq}
    \bigl(\eta_pT_p(J(\zeta))\bigr)
    \bigl(\eta_qT_q(J(\omega))\bigr)\\
    &=
    \sum_{p,q\ge0}
    c_{pq}\eta_p\eta_q
    \rho_x^{-p}\rho_y^{-q}
    T_p(J(\zeta))T_q(J(\omega)).
\end{aligned}
\]
Since the coordinatewise Joukowski map sends the unit torus onto $[-1, 1]^2$,
this proves \eqref{eq:weighted-bivariate-chebyshev-expansion}.

Finally, since $\abs{T_m(t)}\le1$ on $[-1, 1]$, Cauchy--Schwarz gives
\[
    \sum_{m\ge0}\abs{b_m}\eta_m\rho^{-m}
    \le
    \norm{b}_{\ell^2}
    \left(\sum_{m\ge0}\eta_m^2\rho^{-2m}\right)^{1/2}
    <\infty,
\]
and
\[
\begin{aligned}
    \sum_{p,q\ge0}
    \abs{c_{pq}}\eta_p\eta_q\rho_x^{-p}\rho_y^{-q}
    &\le
    \left(\sum_{p,q\ge0}\abs{c_{pq}}^2\right)^{1/2}\\
    &\quad\times
    \left(\sum_{p\ge0}\eta_p^2\rho_x^{-2p}\right)^{1/2}
    \left(\sum_{q\ge0}\eta_q^2\rho_y^{-2q}\right)^{1/2}
    <\infty.
\end{aligned}
\]
These bounds are independent of the points in $[-1, 1]$ and $[-1, 1]^2$, so the
Weierstrass $M$-test gives absolute and uniform convergence.

\end{proof}

We also need two determinant bounds. They are stated here.
\begin{lemma}[Matrix determinant bounds]
\label{lem:analytic-finite-volume}
\leavevmode
\begin{enumerate}

\item Let \(u_0,\ldots,u_N\in\C^n\), where \(N\ge n-1\), and set
\[
    U=[u_0\ \cdots\ u_N].
\]
Then
\begin{equation}
    \det(UU^*)
    \le
    \prod_{r=0}^{n-1}
    \left(
      \sum_{m=r}^N\norm{u_m}_2^2
    \right).
    \label{eq:analytic-gram-tail}
\end{equation}

\item Let
\[
    U\in\C^{n\times p},
    \qquad
    C\in\C^{p\times q},
    \qquad
    V\in\C^{n\times q}.
\]
Then
\begin{equation}
    \abs{\det(UCV^*)}
    \le
    \left(\frac{\norm{C}_F}{\sqrt n}\right)^n
    \sqrt{\det(UU^*)\det(VV^*)}.
    \label{eq:analytic-finite-factorization}
\end{equation}
\end{enumerate}
\end{lemma}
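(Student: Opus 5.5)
For part 1, I will go through the QR factorization of $U^*$. Write the rows of $U$ as $n$ vectors in $\C^{N+1}$, so that $\det(UU^*)$ is the Gram determinant of these rows. The plan is to peel off one row at a time, with the column index controlling how much mass remains.

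A cleaner way to organize this uses the Cauchy--Binet formula:
\[
\det(UU^*)=\sum_{S}\abs{\det U_{:,S}}^2,
\]
where $S$ ranges over $n$-subsets $m_0<\cdots<m_{n-1}$ of $\{0,\ldots,N\}$. Since $m_r\ge r$ for each $r$, the ordered tuple satisfies $m_r\in\{r,\ldots,N\}$. Hadamard's inequality applied to the columns gives
\[
\abs{\det U_{:,S}}^2\le\prod_r\norm{u_{m_r}}_2^2.
\]
Summing over all increasing tuples is then dominated by summing over all tuples with $m_r\ge r$ independently. That independent sum factorizes exactly as $\prod_{r=0}^{n-1}\sum_{m=r}^N\norm{u_m}_2^2$, which proves \eqref{eq:analytic-gram-tail}. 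The only point needing care is the relaxation from increasing tuples to independent ones, which is valid because every term is nonnegative.

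For part 2, I first reduce to the case where $U$ and $V$ have full row rank; otherwise the left side vanishes, since $\operatorname{rank}(UCV^*)<n$. Next I take LQ factorizations $U=L_UQ_U$ and $V=L_VQ_V$, with $Q_U$ and $Q_V$ having orthonormal rows. Then
\[
\abs{\det L_U}=\sqrt{\det(UU^*)},\qquad \abs{\det L_V}=\sqrt{\det(VV^*)},
\]
and
\[
\det(UCV^*)=\det L_U\,\det\bigl(Q_UCQ_V^*\bigr)\,\overline{\det L_V}.
\]
It remains to bound the $n\times n$ matrix $K=Q_UCQ_V^*$. By the AM--GM inequality on the squared singular values,
\[
\abs{\det K}=\prod_i\sigma_i(K)\le\bigl(\norm{K}_F^2/n\bigr)^{n/2}.
\]
Finally, $\norm{K}_F\le\norm{C}_F$, because multiplication by matrices with orthonormal rows or columns does not increase the Frobenius norm. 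Combining these gives \eqref{eq:analytic-finite-factorization}.

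Neither part poses a real obstacle. The step that needs the most care is the Cauchy--Binet/Hadamard relaxation in part 1, specifically checking the index constraint $m_r\ge r$ so that the product telescopes into the tails $\sum_{m\ge r}$. If that bookkeeping went wrong, my fallback would be Gram--Schmidt on the columns in order. In that approach the $r$th diagonal factor of the Cholesky factorization is bounded by the tail mass $\sum_{m\ge r}\norm{u_m}_2^2$, because the first $r$ directions have already been projected out.
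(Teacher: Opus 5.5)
Your proposal is correct and follows essentially the paper's proof. In part 1 you use Cauchy--Binet plus Hadamard and relax the increasing tuples to independent indices with $m_r\ge r$. In part 2, your LQ factorization of $U$ is the paper's thin QR of $U^*$, and you finish with the Frobenius-norm and AM--GM bound; the only cosmetic difference is that you bound $\abs{\det K}$ through its singular values rather than through Hadamard on its rows, which gives the same inequality.
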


\begin{proof}
For the first estimate, the Cauchy--Binet formula~\cite[Section~0.8.7]{HornJohnson2013} gives
\[
    \det(UU^*)
    =
    \sum_{0\le m_1<\cdots<m_n\le N}
    \abs{\det[u_{m_1}\ \cdots\ u_{m_n}]}^2.
\]
Using Hadamard's inequality, we get
\[
\begin{aligned}
    \det(UU^*)
    &\le
    \sum_{0\le m_1<\cdots<m_n\le N}
    \prod_{j=1}^n\norm{u_{m_j}}_2^2\\
    &\le
    \prod_{j=1}^n
    \left(
      \sum_{m=j-1}^N\norm{u_m}_2^2
    \right),
\end{aligned}
\]
because \(m_j\ge j-1\) in every increasing \(n\)-tuple.  This proves
\eqref{eq:analytic-gram-tail}.

For the second estimate, if \(U\) or \(V\) has rank less than \(n\),
then both sides of \eqref{eq:analytic-finite-factorization} vanish.
Thus, assume both have full row rank.
Take thin QR factorizations
\[
    U^*=Q_UR_U,
    \qquad
    V^*=Q_VR_V,
\]
where
$
    Q_U^*Q_U=Q_V^*Q_V=I_n
$, \(R_U,R_V\in\C^{n\times n}\) and
$
    UCV^*
    =
    R_U^*(Q_U^*CQ_V)R_V.
$
Moreover,
\[
    \abs{\det R_U}
    =
    \sqrt{\det(UU^*)},
    \qquad
    \abs{\det R_V}
    =
    \sqrt{\det(VV^*)}.
\]
Consequently,
\begin{equation}
\begin{aligned}
    \abs{\det(UCV^*)}
    =
    \sqrt{\det(UU^*)\det(VV^*)}\,
    \abs{\det(Q_U^*CQ_V)}.
    \label{eq:analytic-qr-factorization}
\end{aligned}
\end{equation}

Multiplication by \(Q_U^*\) or \(Q_V\) cannot increase the Frobenius
norm, so
\[
    \norm{Q_U^*CQ_V}_F\le\norm{C}_F.
\]
Finally, Hadamard's inequality followed by the arithmetic--geometric mean
gives, for every \(H\in\C^{n\times n}\),
\[
\begin{aligned}
    \abs{\det H}
    &\le
    \prod_{j=1}^n\norm{H_{j,:}}_2
    &\le
    \left(
      \frac1n\sum_{j=1}^n\norm{H_{j,:}}_2^2
    \right)^{n/2} 
    &=
    \left(\frac{\norm{H}_F}{\sqrt n}\right)^n.
\end{aligned}
\]
Apply this to \(H=Q_U^*CQ_V\) in
\eqref{eq:analytic-qr-factorization}.
\end{proof}

\begin{restatedtheorem}[Statement of \Cref{thm:analytic-determinant}]
Let $n\ge1$.
\begin{enumerate}
\item Let $\rho>1$ and suppose that
$\mathcal A_\rho^{(x)}(f)\le M$. Then, for arbitrary
$x_1,\ldots,x_n,y_1,\ldots,y_n\in [-1, 1]$,
\[
    \abs{\det[f(x_i,y_j)]_{i,j=1}^n}^{1/n}
    \le
    \frac{2M\sqrt n}{\sqrt{1-\rho^{-2}}}
    \rho^{-(n-1)/2}.
\]

\item Let $\rho_x,\rho_y>1$, and suppose that
$\mathcal A_{\rho_x,\rho_y}^{(x,y)}(f)\le M$. Then, for arbitrary
$x_1,\ldots,x_n,y_1,\ldots,y_n\in [-1, 1]$,
\[
    \abs{\det[f(x_i,y_j)]_{i,j=1}^n}^{1/n}
    \le
    \frac{4M\sqrt n}
         {\sqrt{(1-\rho_x^{-2})(1-\rho_y^{-2})}}
    (\rho_x\rho_y)^{-(n-1)/2}.
\]
\end{enumerate}
\end{restatedtheorem}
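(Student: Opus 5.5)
For part~1, I will factor the sampled matrix through a Chebyshev expansion in $x$. For each $y_j$, apply part~1 of \cref{prop:weighted-chebyshev} to $g=f(\cdot,y_j)$. This gives coefficients $b^{(j)}\in\ell^2(\mathbb N)$ with $\norm{b^{(j)}}_{\ell^2}\le M$ and
\[
    f(x_i,y_j)=\sum_{m\ge0}b^{(j)}_m\eta_m\rho^{-m}T_m(x_i).
\]
To avoid infinite matrices, I first truncate at degree $N\ge n-1$. Let $U_N\in\C^{n\times(N+1)}$ have columns $u_m:=(\eta_m\rho^{-m}T_m(x_i))_{i=1}^n$. Let $V_N\in\C^{n\times(N+1)}$ have $j$th row $(b^{(j)}_0,\ldots,b^{(j)}_N)$ (entrywise conjugated so that $A_N=U_NV_N^*$). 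Then $A_N\to A:=[f(x_i,y_j)]$ as $N\to\infty$ by uniform convergence, and $\det$ is continuous, so it suffices to bound $\abs{\det(U_NV_N^*)}$ uniformly in $N$.

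Applying part~2 of \cref{lem:analytic-finite-volume} with $C=I$ is wasteful, since $\norm{I}_F=\sqrt{N+1}$ grows with $N$. Instead I will use the QR/Cauchy--Binet idea directly: $\abs{\det(UV^*)}\le\sqrt{\det(UU^*)\det(VV^*)}$, by Cauchy--Schwarz on the Cauchy--Binet sum. The factor $\det(VV^*)$ is bounded by Hadamard's inequality on rows: each row has norm at most $M$, so $\det(VV^*)\le M^{2n}$. For $\det(UU^*)$, I use part~1 of \cref{lem:analytic-finite-volume}. Since $\abs{T_m}\le1$, we have $\norm{u_m}_2^2\le n\eta_m^2\rho^{-2m}\le 4n\rho^{-2m}$, so the tail satisfies
\[
    \sum_{m\ge r}\norm{u_m}_2^2\le \frac{4n\rho^{-2r}}{1-\rho^{-2}}.
\]
Taking the product over $r=0,\ldots,n-1$ gives
\[
    \det(UU^*)\le\left(\frac{4n}{1-\rho^{-2}}\right)^n\rho^{-n(n-1)}.
\]
Combining the two factors,
\[
    \abs{\det A}^{1/n}\le M\cdot\frac{2\sqrt n}{\sqrt{1-\rho^{-2}}}\,\rho^{-(n-1)/2},
\]
which is exactly the claimed bound.

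For part~2, I use part~2 of \cref{prop:weighted-chebyshev} to write $A=U C V^{\top}$ after truncation. Here $U$ is as above with $\rho_x$, $V$ is its analogue in $y$ with $\rho_y$, and $C=(c_{pq})$ has $\norm{C}_F\le M$. Part~2 of \cref{lem:analytic-finite-volume} gives
\[
    \abs{\det A}\le (M/\sqrt n)^n\sqrt{\det(UU^*)\det(VV^*)}.
\]
Bounding each Gram determinant as in part~1 gives $\det(UU^*)\le(4n/(1-\rho_x^{-2}))^n\rho_x^{-n(n-1)}$ and the analogous bound for $V$. Taking $n$th roots yields
\[
    \frac{M}{\sqrt n}\cdot\frac{4n}{\sqrt{(1-\rho_x^{-2})(1-\rho_y^{-2})}}\,(\rho_x\rho_y)^{-(n-1)/2},
\]
which is the claim.

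The main obstacle is justifying the truncation limit. The truncated bounds are uniform in $N$ because the tail sums only grow with $N$ toward the infinite series, which the stated bounds already dominate. The truncated matrices converge entrywise to $A$ by the uniform convergence in \cref{prop:weighted-chebyshev}, so the bound passes to $A$. A second point to watch is the conjugation convention in $UCV^*$: I will handle it by replacing $V$ with its entrywise conjugate, which leaves $\det(VV^*)$ unchanged.
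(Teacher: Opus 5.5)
Your proposal is correct and follows essentially the same route as the paper: a truncated Chebyshev factorization, the Cauchy--Binet/Hadamard tail bound for $\det(U_NU_N^*)$, part~2 of \cref{lem:analytic-finite-volume} for the mixed case, and a limit $N\to\infty$. The only deviation is in part~1, where the paper avoids your Cauchy--Schwarz detour by applying part~2 of the lemma with the coefficient matrix $B_N$ as the middle factor and $V=I_n$ (not $C=I$). Since $\norm{B_N}_F^2\le nM^2$, this gives the same bound $\abs{\det A_N}\le M^n\sqrt{\det(U_NU_N^*)}$.
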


\begin{proof}[Proof of Theorem~\ref{thm:analytic-determinant}]
For the one-sided estimate, put $
    q:=\rho^{-1}.
$ For each sampled slice $
    g_j(x):=f(x,y_j),
$ 
Proposition~\ref{prop:weighted-chebyshev} gives a sequence
\(b_j\in\ell^2(\mathbb N)\) such that
$
    \norm{b_j}_{\ell^2}\le M
$
and
\[
    f(x,y_j)
    =
    \sum_{m=0}^\infty
    (b_j)_m\eta_mq^mT_m(x).
\]
For \(N\ge n-1\), define
\[
    u_m
    :=
    \eta_mq^m
    (T_m(x_1),\ldots,T_m(x_n))^T,
    \qquad
    m\ge0,
\]
and set
$
    U_N:=[u_0\ \cdots\ u_N].
$
Let \(B_N\in\C^{(N+1)\times n}\) be given by
$
    (B_N)_{m+1,j}:=(b_j)_m,
$
and set
$
    A_N:=U_NB_N.
$
By Proposition~\ref{prop:weighted-chebyshev}, for each \(i,j\),
\[
   (A_N)_{ij}
    =
    \sum_{m=0}^N
    (b_j)_m\eta_mq^mT_m(x_i)
    \longrightarrow
    f(x_i,y_j) \quad
     \text{ as } N\to\infty.
\]
Thus, with $A:=[f(x_i,y_j)]_{i,j=1}^n$, continuity of the determinant
gives $\det A_N\to\det A$.

Since
\[
    \norm{B_N}_F^2
    =
    \sum_{j=1}^n\sum_{m=0}^N\abs{(b_j)_m}^2
    \le
    nM^2,
\]
part 2 of Lemma~\ref{lem:analytic-finite-volume}, applied with
\(V=I_n\), gives
\begin{equation}
    \abs{\det A_N}
    \le
    M^n\sqrt{\det(U_NU_N^*)}.
    \label{eq:analytic-one-sided-finite}
\end{equation}

Since \(\abs{T_m(x)}\le1\) on \([-1, 1]\),
\[
    \norm{u_0}_2^2=n,
    \qquad
    \norm{u_m}_2^2\le4nq^{2m}
    \quad(m\ge1).
\]
Thus, for every \(r\ge0\),
\[
    \sum_{m=r}^\infty\norm{u_m}_2^2
    \le
    \frac{4n}{1-q^2}q^{2r}.
\]
Part 1 of Lemma~\ref{lem:analytic-finite-volume} therefore gives
\begin{equation}
\begin{aligned}
    \sqrt{\det(U_NU_N^*)}
    &\le
    \prod_{r=0}^{n-1}
    \left(
      \sum_{m=r}^\infty\norm{u_m}_2^2
    \right)^{1/2}\\
    &\le
    \left(
      \frac{2\sqrt n}{\sqrt{1-q^2}}
    \right)^n
    q^{n(n-1)/2}.
\end{aligned}
\label{eq:analytic-one-sided-volume}
\end{equation}
Combining this with \eqref{eq:analytic-one-sided-finite} and letting
\(N\to\infty\) yields
\[
    \abs{\det A}
    \le
    \left(
      \frac{2M\sqrt n}{\sqrt{1-q^2}}
    \right)^n
    q^{n(n-1)/2}.
\]
Taking \(n\)th roots and substituting \(q=\rho^{-1}\) proves
\eqref{eq:analytic-determinant}.

For the mixed estimate, put $
    q_x:=\rho_x^{-1},
    q_y:=\rho_y^{-1}.
$ 
Proposition~\ref{prop:weighted-chebyshev} gives coefficients
\((c_{pq})_{p,q\ge0}\) such that
$
    \sum_{p,q\ge0}\abs{c_{pq}}^2\le M^2
$
and
\[
    f(x,y)
    =
    \sum_{p,q\ge0}
    c_{pq}\eta_p\eta_q
    q_x^pq_y^qT_p(x)T_q(y).
\]

For \(N\ge n-1\), define
\[
    u_p
    :=
    \eta_pq_x^p
    (T_p(x_1),\ldots,T_p(x_n))^T,
    \qquad
    p\ge0,
\]
and
\[
    v_q
    :=
    \eta_qq_y^q
    (T_q(y_1),\ldots,T_q(y_n))^T,
    \qquad
    q\ge0.
\]
Set
\[
    U_N:=[u_0\ \cdots\ u_N],
    \qquad
    V_N:=[v_0\ \cdots\ v_N],
    \qquad
    C_N:=[c_{pq}]_{p,q=0}^N, 
    \qquad
    A_N:=U_NC_NV_N^*.
\]
By the absolute and uniform convergence in
Proposition~\ref{prop:weighted-chebyshev}, for each \(i,j\),
\[
    (A_N)_{ij}
    =
    \sum_{p,q=0}^N
    c_{pq}\eta_p\eta_q
    q_x^pq_y^qT_p(x_i)T_q(y_j)
    \longrightarrow
    f(x_i,y_j) \qquad
     \text{ as } N\to\infty.
\]
Hence
$
    A_N\longrightarrow
    A:=[f(x_i,y_j)]_{i,j=1}^n
$
entrywise and
$
    \det A_N\longrightarrow\det A.
$

Moreover,
\[
    \norm{C_N}_F^2
    =
    \sum_{p,q=0}^N\abs{c_{pq}}^2
    \le M^2.
\]
Part 2 of Lemma~\ref{lem:analytic-finite-volume} gives
\begin{equation}
\begin{aligned}
    \abs{\det A_N}
    \le
    \left(\frac{M}{\sqrt n}\right)^n
    \sqrt{
      \det(U_NU_N^*)\det(V_NV_N^*)
    }.
    \label{eq:analytic-mixed-finite}
\end{aligned}
\end{equation}

The same calculation as in \cref{eq:analytic-one-sided-volume} gives
\[
    \sqrt{\det(U_NU_N^*)}
    \le
    \left(
      \frac{2\sqrt n}{\sqrt{1-q_x^2}}
    \right)^n
    q_x^{n(n-1)/2},
\]
and
\[
    \sqrt{\det(V_NV_N^*)}
    \le
    \left(
      \frac{2\sqrt n}{\sqrt{1-q_y^2}}
    \right)^n
    q_y^{n(n-1)/2}.
\]
Substituting these bounds into
\eqref{eq:analytic-mixed-finite} yields
\[
    \abs{\det A_N}
    \le
    \left(
      \frac{
        4M\sqrt n
      }{
        \sqrt{(1-q_x^2)(1-q_y^2)}
      }
    \right)^n
    (q_xq_y)^{n(n-1)/2}.
\]
Letting \(N\to\infty\), taking \(n\)th roots, and substituting
\(q_x=\rho_x^{-1}\), \(q_y=\rho_y^{-1}\), proves
\eqref{eq:mixed-analytic-determinant}.
\end{proof}

\end{document}